\newcommand{\R}{\mathbb{R}}
\newcommand{\C}{\mathbb{C}}
\newcommand{\N}{\mathbb{N}}
\newtheorem{teo}{Theorem}[section]
\newtheorem{obs}[teo]{Remark}
\newtheorem{lema}[teo]{Lemma}
\newtheorem{defi}[teo]{Definition}
\newtheorem{prop}[teo]{Proposition}
\newcommand{\re}{\mathbb{R}}
\date{}
\newcommand{\sgn}{{\operatorname{sgn}}}
\title[Limit cycles for some families of smooth and non-smooth planar systems]
{Limit cycles for some families of smooth and non-smooth planar
systems}
\author[C. A. Buzzi]{Claudio A. Buzzi}
\address{Mathematics Department, Universidade Estadual Paulista Julio
de Mesquita Filho, 15054-000 S\~ao Jos\'{e} do Rio Preto, S\~ao Paulo,
Brazil } \email{claudio.buzzi@unesp.br}
\author[Y. R. Carvalho]{Yagor Romano Carvalho}
\address{Mathematics Department, Universidade Estadual Paulista Julio
de Mesquita Filho, 15054-000 S\~ao Jos\'{e} do Rio Preto, S\~ao Paulo,
Brazil} \email{yagor.carvalho@unesp.br}
\author[A. Gasull]{Armengol Gasull}
\address{Mathematics Department, Universitat Aut\`{o}noma de Barcelona,
08193 Bellaterra, Barcelona, Catalonia, Spain}
\email{gasull@mat.uab.cat}
\keywords{Limit cycles; Averaging first order methods via Brouwer's
degree; Extended complete Chebyshev space; Hilbert numbers.}
\subjclass[2010]{Primary: 34C07; 37G15. Secondary: 34C25; 34C29;
37C27}
\begin{document}

\begin{abstract}
In this paper, we apply the averaging method via Brouwer degree in a
class of planar systems given by a linear center perturbed by a sum
of continuous homogeneous vector fields, to study lower bounds for
their number of limit cycles. Our results can be applied to models
where the smoothness is lost on the set $\Sigma=\{xy=0\}$. We also
apply them to present a variant of Hilbert 16th problem, where the
goal is to bound the number of limit cycles in terms of the number
of monomials of a family of polynomial vector fields, instead of
doing this in terms of their degrees.
\end{abstract}
\maketitle

\section{Introduction }

A limit cycle is a periodic orbit isolated in the set of all periodic orbits in a differential system.
 The existence of limit cycles became important in the applications to the real world, because many
  phenomena are related with their existence, see for instance the Van der Pol oscillator
   \cite{Van1920,Van1926}.  One of the useful tools to detect such objects is the averaging
   theory. We refer to the book of Sanders and Verhulst \cite{SandVerhulst1985} and to the
    book of Verhulst \cite{Verhulst1991} for an introduction of this subject.
    Buica and  Llibre in \cite{BuiLli2004}, generalized the averaging theory for
    studying periodic solutions of continuous differential systems using mainly the Brouwer degree.

The theory of piecewise smooth differential system has been
developing  very fast  and it has become certainly an important
common frontier between Mathematics, Physics and Engineering for
example. In many works on piecewise smooth differential system the
set $\Sigma$, where the systems lose smoothness, is a regular
manifold. But a few years ago it was increasing the study of the
case where $\Sigma$ can be the union of regular manifolds, which
includes, the case when $\Sigma$ is not regular,
 but it is an algebraic manifold.  See for instance Panazzolo and Da Silva in \cite{PanPau2017}.
  Also there are works that deal with the search of limit cycles of discontinuous systems with $\Sigma$
   being an algebraic manifold, see for instance \cite{LliTei2015} and \cite{Nov2014}.

In this work we give some lower bounds for the number of limit
cycles in some classes of continuous, non necessarily locally
Lipschitz, piecewise smooth differential systems with
$\Sigma=\{xy=0\}.$ The main technique will be the averaging theory
via Brouwer degree developed in \cite{BuiLli2004,BuiLliMak2015}.

In Section \ref{se:models} we explain some of the problems that have
motivated our study. They include  systems that model the capillary
rise, some population models and also some type of SIR models. All
of them have in common that can be written as differential equations
of the form
\begin{equation}\label{eq:mot}
\dot x=f(x,y,\sqrt x,\sqrt y), \quad \dot y= g(x,y,\sqrt x,\sqrt y),
\end{equation}
with $f$ and $g$ smooth or polynomial functions. Extending the
function $\sqrt u$ as $\sgn(u)\sqrt{|u|}$ these systems can be
considered in the full plane but they are non-smooth on the set
$\Sigma=\{xy=0\}.$ Clearly, they fall into the category of systems
described above. Notice also than on $\Sigma,$ the corresponding
vector fields are not Lipschitz functions.

In fact these systems could also be treated by introducing new
variables $u$ and $v$ such that $u^2=x$ and $v^2=y$ and changing the
time, but as we will see, our approach can be applied directly to
the original system and can also be applied to more general systems
involving simultaneously more non-differentiable functions. For
instance  functions like $\sqrt[k] x,$ for different values of $k,$
also fall in our point of view.

Recall that a continuous  vector field  $X(x,y)$ is called
homogeneous with degree of homogeneity  $\alpha,$ where $0\le
\alpha\in\R,$ if $X(rx,ry)=r^{\alpha}X(x,y)$ for all $(x,y)\in\R^2$
and all $0\le r\in\R.$ For convenience we will write it as
$X(x,y)=(f(x,y),g(x,y))$ instead of the more usual way
$X(x,y)=f(x,y)\frac{\partial }{\partial
x}+g(x,y)\frac{\partial}{\partial y}.$ When $\alpha<1$ this vector
field is continuous but not Lipschitz. Its associated planar system
of differential equations is $(\dot x,\dot y)=X(x,y),$ or
equivalently, $\dot x=f(x,y),$ $\dot y=g(x,y).$ We prove:

\begin{teo}\label{th:main1}
    Consider the class $\mathcal{F}_{\bf a}$ of planar vector fields
\begin{equation}\label{eq:main}
X(x,y)=(-y,x)+\sum_{j=0}^n a_j X_j(x,y), \quad {\bf
a}=(a_0,a_1,\ldots,a_n)\in\R^{n+1},
\end{equation}
where   for each $j,$  $X_j=(f_j,g_j)$ is a fixed continuous
homogeneous vector field with  degree of homogeneity $0\le
\alpha_j\in\R$ and $\alpha_0<\alpha_1<\cdots<\alpha_n.$ There exist
values of $\bf a$ such that the differential equation associated to
$X$ has at least $m$  limit cycles, where $m+1$ is the number of
non-zero values among
\[
I_j=\int_0^{2\pi} \big(f_j(\cos \theta,\sin \theta) \cos \theta+
g_j(\cos \theta,\sin \theta) \sin \theta \big)\, d\theta, \quad
j=0,1,\ldots,n.
\]

Moreover, if all the vector fields $X_j$ are of class
$\mathcal{C}^1,$ the $m$ limit cycles obtained above are hyperbolic.
\end{teo}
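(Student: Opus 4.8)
The plan is to rewrite the system in polar coordinates, turn the (not necessarily small) parameter vector ${\bf a}$ into a small parameter through the rescaling ${\bf a}=\varepsilon\,{\bf b}$, compute the first order averaged function, and then apply the averaging method via Brouwer degree of \cite{BuiLli2004,BuiLliMak2015}, the zeros of the averaged function being prescribed with the help of the fact that $\{\rho^{\alpha_j}\}$ is an Extended Complete Chebyshev (ECT) system on $(0,\infty)$. Concretely, setting $x=r\cos\theta$, $y=r\sin\theta$ and using $f_j(r\cos\theta,r\sin\theta)=r^{\alpha_j}f_j(\cos\theta,\sin\theta)$ (and likewise for $g_j$), a direct computation gives
\begin{align*}
\dot r &= \sum_{j=0}^{n} a_j\, r^{\alpha_j}\, F_j(\theta),\\
\dot\theta &= 1+\sum_{j=0}^{n} a_j\, r^{\alpha_j-1}\, G_j(\theta),
\end{align*}
with $F_j(\theta)=f_j(\cos\theta,\sin\theta)\cos\theta+g_j(\cos\theta,\sin\theta)\sin\theta$ and $G_j(\theta)=g_j(\cos\theta,\sin\theta)\cos\theta-f_j(\cos\theta,\sin\theta)\sin\theta$, so that $I_j=\int_0^{2\pi}F_j(\theta)\,d\theta$. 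Putting ${\bf a}=\varepsilon\,{\bf b}$ with ${\bf b}$ fixed and $\varepsilon>0$ small, on any closed annulus $0<r_1\le r\le r_2$ we have $\dot\theta=1+O(\varepsilon)>0$, so $\theta$ may be taken as the new time and $\frac{dr}{d\theta}=\varepsilon\sum_{j=0}^{n} b_j\, r^{\alpha_j}\, F_j(\theta)+O(\varepsilon^2)$, whose first order averaged function is
\[
f(\rho)=\int_0^{2\pi}\Big(\sum_{j=0}^{n} b_j\,\rho^{\alpha_j}\,F_j(\theta)\Big)d\theta=\sum_{j=0}^{n} b_j\,I_j\,\rho^{\alpha_j}.
\]

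Next I would prescribe the zeros of $f$. Let $j_0<\dots<j_m$ be the indices for which $I_{j_k}\neq0$; choosing $b_j=0$ whenever $I_j=0$ gives $f(\rho)=\sum_{k=0}^{m} c_k\,\rho^{\alpha_{j_k}}$ with $c_k:=b_{j_k}I_{j_k}$, and since the $b_{j_k}$ are free and $I_{j_k}\neq0$ the coefficients $c_0,\dots,c_m$ are arbitrary. Because $\alpha_{j_0}<\dots<\alpha_{j_m}$, the ordered family $\{\rho^{\alpha_{j_0}},\dots,\rho^{\alpha_{j_m}}\}$ is an ECT system on $(0,\infty)$ (after the change $\rho=e^{t}$ it becomes the classical ECT system formed by exponentials with distinct exponents), so any nonzero element of its span has at most $m$ zeros counted with multiplicity. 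Fixing any points $0<\rho_1<\dots<\rho_m$, the evaluation map into $\R^{m}$ has nontrivial kernel, hence there is a nonzero element of the span vanishing at $\rho_1,\dots,\rho_m$; these $m$ zeros are automatically simple, since a multiple one would force more than $m$ zeros with multiplicity. This selects the vector ${\bf b}$.

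I would then apply the first order averaging theorem via Brouwer degree on the annulus $r_1\le r\le r_2$ with $r_1<\rho_1$, $r_2>\rho_m$: the averaged function $f$ has a simple zero at each $\rho_i$, hence local Brouwer degree $\sgn(f'(\rho_i))=\pm1\neq0$, so there is $\varepsilon_0>0$ such that for $0<\varepsilon<\varepsilon_0$ the $2\pi$-periodic scalar equation for $r$ has a $2\pi$-periodic solution close to each $\rho_i$; closing it up yields a periodic orbit of $X$ around the origin, and altogether at least $m$ limit cycles of the system with ${\bf a}=\varepsilon\,{\bf b}$. Since these orbits stay in $r_1\le r\le r_2$, bounded away from the origin, the only locus where $X$ is not Lipschitz, namely $\Sigma=\{xy=0\}$, causes no difficulty, because the Brouwer-degree version of averaging requires only continuity, which holds throughout this region. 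Finally, if every $X_j$ is $\mathcal C^1$ then $X$ is $\mathcal C^1$ off the origin and the $\mathcal C^1$ version of the averaging theorem applies on the same annulus: the Poincaré map on a ray $\{\theta=0\}$ has the form $P_\varepsilon(\rho)=\rho+\varepsilon f(\rho)+O(\varepsilon^2)$, so $P_\varepsilon'$ evaluated at the limit cycle equals $1+\varepsilon f'(\rho_i)+O(\varepsilon^2)\neq1$ for $\varepsilon$ small, which gives hyperbolicity.

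The delicate point is not any individual computation but making the averaging scheme legitimate in this non-Lipschitz setting: restricting everything to a closed annulus $r_1\le r\le r_2$ (so that $\dot\theta>0$ and the powers $r^{\alpha_j-1}$ with $\alpha_j<1$ stay bounded), invoking the continuous/Brouwer-degree version of the averaging theorem of \cite{BuiLli2004,BuiLliMak2015} rather than its classical smooth counterpart, and verifying carefully both the ECT character of $\{\rho^{\alpha_{j_k}}\}$ on $(0,\infty)$ and the existence of an element of its span with exactly $m$ prescribed, necessarily simple, zeros.
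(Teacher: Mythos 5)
Your proposal follows essentially the same route as the paper: the rescaling ${\bf a}=\varepsilon{\bf b}$, passage to polar coordinates using homogeneity, reduction to $\frac{dr}{d\theta}=\varepsilon\sum_j b_jF_j(\theta)r^{\alpha_j}+O(\varepsilon^2)$ on an annulus, the averaged function $\sum_j b_jI_j\rho^{\alpha_j}$, the ECT property of the surviving powers to produce $m$ simple positive zeros, the Brouwer-degree averaging theorem, and a Poincar\'{e}-map/Melnikov argument for hyperbolicity in the $\mathcal{C}^1$ case. The only genuinely different (and equally valid) details are that you establish the ECT property via the substitution $\rho=e^t$ and the classical exponential system rather than by the Wronskian--Vandermonde computation of Lemma~\ref{lemawronsk}, and that you phrase hyperbolicity through the return map rather than through the identity $M(k)=\sqrt{k}\,h(\sqrt{k})$ with the first-order Melnikov function; your explicit insistence on working in a closed annulus where $\dot\theta>0$ and $r^{\alpha_j-1}$ stays bounded is a point the paper treats more casually.

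There is one step you assert rather than prove: in the merely continuous case you pass directly from ``$2\pi$-periodic solution near each $\rho_i$'' to ``$m$ limit cycles.'' The Brouwer-degree averaging theorem only yields, for each small $\varepsilon$, \emph{some} periodic solution tending to $\rho_i$; it does not by itself exclude that this solution sits inside a continuum of periodic orbits, in which case it would not be a limit cycle. The paper closes this gap by observing that the right-hand side of the reduced equation \eqref{eq:pol} is continuous in $\theta$ and differentiable in $r$ for $r>0$ (the non-smoothness of the $X_j$ is absorbed into the $\theta$-dependence by homogeneity), so that Theorem~3 of \cite{BuiLliMak2015} applies and guarantees the obtained periodic orbits are isolated. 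Your own Poincar\'{e}-map computation, which you reserve for the $\mathcal{C}^1$ case, in fact already works here for the same reason and would supply the missing isolation argument; you just need to say so.
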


The proof of Theorem~\ref{th:main1} is based on the averaging first
order results for continuous differential equations via Browuer's
degree given in \cite{BuiLli2004,BuiLliMak2015}. This result extends
some of the results of \cite{CimGasMan1995} to the non-smooth case.

Notice that some simple examples of non-smooth $X_j$  where our
approach can be used are for instance
\[
X_j(x,y)= \big(a_j\sgn{(x)}|x|^{\alpha_j} +b_j
\sgn{(y)}|y|^{\alpha_j}, c_j\sgn{(x)}|x|^{\alpha_j} +d_j
\sgn{(y)}|y|^{\alpha_j}\big),
\]
where $0<\alpha_j<1.$ They clearly  include our goal functions.

The second part of the paper deals with polynomial vector fields.
Recall that the second part of the Hilbert's
     16th problem asks about the maximum number of limit cycles for planar polynomial vector fields in terms of
     their degrees. Usually, the maximum number
     of limit cycles of vector fields of degree $n,$ is denoted as $\mathcal{H}(n)$ (admiting, in principle that this number could be infinity)
       and  it is called Hilbert number. To prove its finiteness, and to know it, is one of the most
    famous and difficult open problems in mathematics, see
    \cite{Ily2002,Sma1998}. It is known that  $\mathcal{H}(1)=0,$ $\mathcal{H}(2)\ge4,$
    $\mathcal{H}(3)\ge 13,$ see \cite{ProTor2019} for more lower bounds for small $n$ and other related references.
     It is also known that there is a sequence of values $n$ going
    to infinity such that
    $\mathcal{H}(n)\ge M(n)$ where $M(n)=\big(\frac{n^2\log(n)}{2\log 2}\big)(1+o(1)),$ see for instance \cite{AlvColDePro2020} and their
    references. To the best of our knowledge the first result
    proving the existence of a lower bound of type $O(n^2\log(n))$
    for $\mathcal{H}(n)$ is due to Christopher and Lloyd
    (\cite{ChrLlo1995}).

From the statement of Theorem \ref{th:main1} we start to think into
a different version of Hilbert sixteenth problem facing the question
from a different point of view. Instead of trying to bound the
number of limit cycles in terms of the degrees of the vector fields
we start wondering ourselves if it is not better to do this in terms
of the number of homogeneous vector fields involved in a family.
Very soon, we realize that this leads essentially  to the same
problem, because polynomial vector fields of degree $n$ are the sum
of $n+1$ homogeneous vector fields. In fact, it is even a worst
point of view in the light of the following family of polynomials
vector fields studied in \cite{GasLiTor2015},
\[
\dot z= Az+Bz|z|^{2(k-3)}+{\rm i}c\overline z ^{k-1},
\]
where $z=x+{\rm i}y,$  $A=a_1+{\rm i}a_2,$ $B=b_1+{\rm i}b_2\in\C,$
$c\in\R$ and $k\ge3.$ It has at least $k$ limit cycles but it can
also be written in real variables as
\[
(\dot x,\dot
y)=a_1X_1(x,y)+a_2X_2(x,y)+b_1X_3(x,y)+b_2X_4(x,y)+cX_5(x,y),
\]
that is, involving only five homogeneous vector fields with only 3
different degrees.

Nevertheless this way of thinking the problem lead us to a new point
of view that we hope that results interesting for the reader: Why do
not try to study the number of limit cycles in terms of the number
of homogeneous vector fields formed by  single monomials?

Somehow this point of view tries to mimic the role of Descartes
theorem for studying the number of real zeroes of a polynomial
$P(x)$ of degree $n,$ having  $m$ non-zero monomials. Recall that
while the maximum number of real roots is $n$, the actual maximum
number of real roots is $2m-1$ and this bound is independent of the
degree of $P.$ In fact, $P$ has at most $m-1$ positive roots, $m-1$
negative roots, and eventually the root $0.$

To state more clearly our point of view and our results, for each
$m\in\N$ fixed, we consider the following family  of polynomials
differential equations:

\begin{itemize}
\item Family  $\mathcal{M}_m$ given by
\[
(\dot x,\dot y)=\sum _{j=1}^m a_j X_j(x,y),\quad \mbox{with}\quad
X_j(x,y)=\begin{cases}\big(x^{n_j}y^{k_j},0\big),&  or,\\
\big(0,x^{n_j}y^{k_j}\big),&
\end{cases}
\]
where ${\bf a}\in\R^{m}$ and the couples $(n_j,k_j)\in\N^2$ vary
among all the possible values. Varying $m,$ this family covers all
polynomial differential equations. The letter $\mathcal M$ is chosen
because the important point is to count the number of involved
monomials.
\end{itemize}

We define $\mathcal{H}^M[m]\in\N\cup\{\infty\}$ to be the maximum
number of limit cycles that systems of the family $\mathcal{M}_m$
can have.

Next theorem includes our results about lowers bounds for this
Hilbert type number. The proof of the first part for $m\ge3$ is a
straightforward consequence of Theorem \ref{th:main1} and also a
consequence of other known results about classical Li\'{e}nard systems.
The second part is a direct corollary of the recent paper
\cite{AlvColDePro2020} and uses generalized Li\'{e}nard systems.

\begin{teo}\label{th:main2} With the notation introduced above it holds that $\mathcal{H}^M[m]=0$ for $m=1,2,3$ and for  $m\ge 4,$
$\mathcal{H}^M[m]\ge m-3.$ Moreover, there exits a sequence of
values of $m$ tending to infinity such that $\mathcal{H}^M[m]\ge
N(m),$ where
\[
N(m)=\Big(\frac{(\frac{m-3}2)\log(\frac{m-3}2)}{\log
2}\Big)(1+o(1)).
\]
\end{teo}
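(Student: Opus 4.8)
The plan is to treat the three assertions of Theorem~\ref{th:main2} separately: the non-existence $\mathcal{H}^M[m]=0$ for $m\le 3$, the linear lower bound $\mathcal{H}^M[m]\ge m-3$ for $m\ge 4$, and the asymptotic lower bound along a suitable sequence of $m$. The two lower bounds are short: the first is a direct application of Theorem~\ref{th:main1}, and the second is obtained by reading the recent construction of \cite{AlvColDePro2020} in terms of the number of monomials instead of the degree. The part that requires genuine work is the non-existence statement, which reduces to a finite but somewhat delicate case analysis; I expect this to be the main obstacle.

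\emph{Lower bound $\mathcal{H}^M[m]\ge m-3$.} Inside $\mathcal{M}_m$ I would take the vector field
\[
X(x,y)=(-y,x)+\sum_{j=0}^{m-3}a_j\,\big(x^{2j+1},0\big),
\]
which is a sum of exactly $m$ monomial vector fields: the two monomials $-y$ and $x$ (one in each component) coming from the linear center, together with the $m-2$ monomials $x^{2j+1}$, $j=0,\dots,m-3$, in the first component. Each $X_j=(x^{2j+1},0)$ is a polynomial, hence $\mathcal{C}^1$, homogeneous vector field, and the degrees of homogeneity $1<3<\cdots<2m-5$ are pairwise distinct, so $X$ has exactly the form required in Theorem~\ref{th:main1}. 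For this choice
\[
I_j=\int_0^{2\pi}\cos^{2j+2}\theta\,d\theta>0,\qquad j=0,\dots,m-3,
\]
so all $m-2$ integrals $I_j$ are non-zero, and Theorem~\ref{th:main1} provides values of $\mathbf a$ for which $X$ has at least $(m-2)-1=m-3$ limit cycles, hyperbolic since every $X_j$ is $\mathcal{C}^1$. As $X\in\mathcal{M}_m$ this gives $\mathcal{H}^M[m]\ge m-3$ (vacuous for $m\le 3$, the content being $m\ge 4$). Up to the change $y\mapsto -y$ this $X$ is a classical Li\'enard system $\dot x=y-F(x)$, $\dot y=-x$ whose odd polynomial $F$ has $m-2$ free monomial coefficients, so the same bound also follows from classical results on Li\'enard systems, as claimed in the statement.

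\emph{Non-existence for $m\le 3$.} I would show that no system in $\mathcal{M}_1,\mathcal{M}_2,\mathcal{M}_3$ has a limit cycle, by distinguishing cases according to how the monomial vector fields are distributed between the two components and which of the exponents vanish. If all monomials lie in one component, one of $\dot x,\dot y$ is identically zero and the other coordinate is constant on orbits, so the remaining one-dimensional dynamics has no periodic orbit; the same holds whenever one component depends on a single variable, as that variable then obeys an autonomous one-dimensional equation. For $m=2$ with one monomial in each component, $\dot x=a_1x^{n_1}y^{k_1}$, $\dot y=a_2x^{n_2}y^{k_2}$ is separable, so on each open quadrant it has an explicit (Darbouxian, possibly logarithmic) first integral and hence no isolated periodic orbit; a short extra argument (each coordinate axis is invariant or consists of equilibria, or else the relevant component has constant sign off the axes) rules out limit cycles crossing the axes. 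For $m=3$ with a $2{+}1$ split one runs through the finitely many remaining sub-cases, in each of which one of the following applies: after multiplying the field by the positive factor $|x|^{-n_3}|y|^{-k_3}$ on an open quadrant some coordinate becomes strictly monotone, so no closed orbit lies in an open quadrant; when an axis fails to be invariant, the field crosses it always in the same direction, impossible for a simple closed curve, or the system is separable in the relevant half-plane; a Bendixson--Dulac factor $x^\alpha y^\beta$ makes the divergence sign-definite; or, after eliminating one variable, the system becomes a classical Li\'enard equation $\ddot z+f(z)\dot z+g(z)=0$ with $f,g$ single monomials whose sign data---or a reversibility symmetry forcing the origin to be a center---preclude limit cycles. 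This gives $\mathcal{H}^M[m]=0$ for $m=1,2,3$.

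\emph{Asymptotic lower bound.} Here I would only recount monomials in a known construction. By \cite{AlvColDePro2020} there is a sequence of degrees $n\to\infty$ for which a generalized Li\'enard system $\dot x=y$, $\dot y=-g_n(x)-f_n(x)y$ with $\deg f_n=\deg g_n=n$ has at least $\big((n\log n)/\log 2\big)(1+o(1))$ limit cycles. Expanding $g_n$ and $f_n(x)y$ into monomials, this system is a sum of at most $2n+3$ monomial vector fields---the monomial $y$ in the first component, and the monomials $x^i$ of $g_n$ together with the monomials $x^i y$ of $f_n(x)y$ in the second---so it lies in $\mathcal{M}_m$ for $m=2n+3$; since $\mathcal{H}^M[\cdot]$ is non-decreasing (one may adjoin monomials with zero coefficient), $\mathcal{H}^M[2n+3]$ is at least its number of limit cycles. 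Writing $n=(m-3)/2$ and absorbing the bounded additive shift into $1+o(1)$, this is exactly $\mathcal{H}^M[m]\ge N(m)$ along the corresponding sequence of $m$, completing the proof. As indicated, the genuine difficulty is the bookkeeping of the $m\le 3$ analysis---each individual configuration of at most three monomials is elementary, but covering all of them, and in particular routing the degenerate ones to the appropriate classical Li\'enard non-existence result or reversibility argument, is where the care lies.
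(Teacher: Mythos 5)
Your proposal follows essentially the same route as the paper: the bound $\mathcal{H}^M[m]\ge m-3$ comes from an $m$-monomial Li\'enard-type perturbation of the linear center for which all the averaged integrals $I_j$ are positive (you put the odd powers in $\dot x$ where the paper puts them in $\dot y$, an immaterial difference), the asymptotic bound comes from counting the $2n+3$ monomials in the generalized Li\'enard systems of \cite{AlvColDePro2020}, and the $m\le 3$ non-existence is a case analysis using the same criteria (one-dimensional reduction, first integrals on invariant regions, axis invariance, Bendixson--Dulac, reversibility) that the paper packages as Proposition~\ref{le:lema}. The only caveat is that you leave that finite case analysis as an outline, whereas the paper carries it out in full; the tools you name are the ones that suffice, so this is bookkeeping rather than a gap.
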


A similar result could be stated by using the lower bounds of
$\mathcal{H}(n)$ of type $O(n^2\log(n))$ because the systems of
degree $n$ involve $m=(n+1)(n+2)$ monomials. These systems and the
ones of \cite{AlvColDePro2020} are relevant because for $m$ big
enough they have more limit cycles than monomials.

It is not difficult to see that all results given in Theorem
\ref{th:main2} also hold for the subclass of $\mathcal{M}_m$  of
second order differential equations $\ddot x= P(x,\dot x)$ with $P$
being a polynomial with $m-1$ monomials because classical
Li\'{e}nard differential equations write as $\ddot x= f(x)+g(x)\dot x,$
or equivalently, like the system $(\dot x, \dot y)=(y,-f(x)-g(x)y).$

It is also worth to mention that the celebrated examples of
quadratic systems that prove that $\mathcal{H}(2)\ge4$ are given by
systems with $m=8$ monomials, see for instance
\cite{CheArtLli2003,Per1984}, and so they have $m-4$ limit cycles.
The cubic system given in \cite{LiLiuYan2009} proving that
$\mathcal{H}(3)\ge13$ has $m=9$ monomials and at least $m+4$ limit
cycles.

Under the light of the above results, a  natural problem is to find
the minimal $m$  such that there exists a system with $m$ monomials
having at least $m+1$ limit cycles.

\section{Some motivating models}\label{se:models}

In this section we shortly explain some models that motivate the
class of equations \eqref{eq:mot} that can be treated with the tools
introduced  in this paper.

\subsection{Capillary rise}\label{ss:cap} A first example is given by the equation that
models the capillary rise.  The capillary action is a physical
  property that the fluids have in to go down or up in extremely thin tubes.
  Sometimes this action to do the liquid to go up against the force of gravity
  or even to induce a magnetic field.  This ability to rise or fall results from
   the ability of the liquid to ``wet'' or not the pipe surface (glass, plastic,
    metal, etc.).  For instance  in the case of water in a glass beaker, we have
     tendency of water to adhere to the glass, bending upward near the wall, forming
      a concave meniscus and rising to a certain height above water level, here we
       have a capillary rise.  In the case of mercury the opposite happens,
        the tendency of mercury is to move away from the wall, forming a convex
         meniscus and descending at a certain height from the mercury level,
          here we have a capillary depression.

 This phenomenon is described in more detail in \cite{PloSwi2018}
   and can be modeled in an adimensional way by the planar system
\begin{equation*}
\left\{ \begin{array}{l}
\dot x=y \\
\dot y = 1 - a y - \sqrt{2x}
\end{array} \right.,
\end{equation*}
where  $a$ is a positive parameter.

\subsection{Some population models}

Following \cite{AjrPitVen2011} we introduce the herd behavior. If
$R$ represents the density of certain population, namely number of
individuals per surface unit, with the herd occupying an area $A,$
then the individuals who take the outermost positions in the herd
are proportional to the perimeter of the region where the herd is
located whose length depends on $\sqrt{A}.$ They are therefore in
number proportional to the square root of the density, that is to
$\sqrt{R},$ with a proportionality constant that depend on the shape
of the herd.  Then, the interactions with the second population with
density $Q$ occur only via these peripheral individuals, so that
instead of the standard  $RQ$ that appears in the usual
predator-prey systems, there is a term proportional to $\sqrt{R}Q.$
In a dimensional-less set of variables these type of models write as
\begin{equation*}
\left\{ \begin{array}{l}
\dot x=x(1-x)-y\sqrt x, \\
\dot y = -xy+c y\sqrt{x},
\end{array}\right.
\end{equation*}
for $c\in\R,$ see also \cite{Bra2012}. For other population models,
involving also square roots, see \cite[Sec. 4.10]{Braun}.

\subsection{ A SIR type model} In \cite{Mic2012} the author
proposes a variation of the classical SIR model.  Recall that it is
a mathematical model of the spread of infectious diseases that
classifies the population in three categories: Susceptible,
Infectious, or Recovered. This model relate these categories by the
differential system
\begin{equation*} \dot S= -\beta
\sqrt{SI},\quad \dot I =\beta\sqrt{SI}-\gamma\sqrt{I},\quad \dot
R=\gamma\sqrt I
\end{equation*}
where $\alpha,\beta$ and $\gamma$ are real parameters. Notice that
it can be studied via a planar system because $\dot S+\dot I+\dot
R=0$ and as a consequence $S(t)+I(t)+R(t)=S_0+I_0+R_0.$

\section{Definitions and Preliminaries. } 

In this section we review some definitions and results that will be used in this paper. For the
characterization of Chebyshev Systems in an open interval we will use the following results which
 can be found in \cite{KarStu1966} and \cite{Mardesic1998}.

\begin{defi}
    Let $ u_0,\ldots, u_ {n-1}, u_n $ be functions defined in an open interval $ L $ of $ \re $.
    The ordered set
     $ (u_i )_ {i = 0} ^ n $ forms an extended complete Chebyshev system, for short $
         ECT $-system, on $ L $ if any nontrivial linear combination $ a_0u_0 +\cdots+ a_k u_k $
          has at most $ k $ isolated roots in $ L $ counting multiplicity, for every $ k = 0,1,\ldots, n.$
\end{defi}

The following result is a very useful characterization of smooth
$ECT$-systems in terms of Wronskians.
\begin{prop}\label{lemasistemaTintervaloaberto}
    The set of ordered $\mathcal{C}^n$-functions $(u_0, \ldots, u_n)$ forms an $ ECT $-system on $ L $  if, and only if,
     for every $ k = 0, ..., n $,
    \begin{equation*}
    W(u_0,\ldots,u_k)(x) =\left| \begin{array}{ccc}
    u_0(x) & \cdots & u_k(x) \\
    u_0'(x) &  \cdots & u_k'(x) \\
    \vdots &  \ddots & \vdots \\
    u_0^{(k)}(x)  & \cdots & u_k^{(k)}(x)
    \end{array} \right|\ne0,
    \end{equation*}
 for every $ x \in L $.
\end{prop}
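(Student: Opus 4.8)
The statement is the classical Wronskian characterization of smooth $ECT$-systems, so the plan is to reproduce the standard two–sided argument: the ``only if'' part by a linear–algebra/counting argument, and the ``if'' part by induction on $n$ using Rolle's theorem together with a Wronskian–reduction identity.

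For the ``only if'' direction I would argue by contraposition. Suppose $(u_0,\ldots,u_n)$ is an $ECT$-system on $L$ but $W(u_0,\ldots,u_k)(x_0)=0$ for some $k\le n$ and some $x_0\in L$. The vanishing of this Wronskian means that the $(k+1)\times(k+1)$ matrix $\big(u_i^{(j)}(x_0)\big)_{0\le i,j\le k}$ is singular, so there are constants $a_0,\ldots,a_k$, not all zero, with $\sum_{i=0}^k a_i\,u_i^{(j)}(x_0)=0$ for every $j=0,\ldots,k$. Then $f:=\sum_{i=0}^k a_i u_i$ is a nontrivial element of $\operatorname{span}\{u_0,\ldots,u_k\}$ with a zero of multiplicity at least $k+1$ at $x_0$, contradicting the defining property of the $ECT$-system, which permits such an $f$ at most $k$ zeros counted with multiplicity. (Here one uses that in an $ECT$-system every nontrivial combination has only isolated zeros, so that ``counted with multiplicity'' is meaningful; this is part of the standard theory and follows by a limiting argument applied to nearby zeros.)

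For the ``if'' direction I would induct on $n$. The base case $n=0$ is immediate, since $W(u_0)=u_0\ne0$ says precisely that $a_0u_0$ has no zero when $a_0\ne0$. For the inductive step, assume the result up to $n-1$ and that $W(u_0,\ldots,u_k)\ne0$ on $L$ for every $k\le n$. In particular $u_0\ne0$, so the functions $v_i:=\big(u_{i+1}/u_0\big)'$, $i=0,\ldots,n-1$, are well defined and of class $\mathcal{C}^{n-1}$ on $L$. The computational ingredient is the reduction identity
\[
W(v_0,\ldots,v_{k-1})=\frac{W(u_0,u_1,\ldots,u_k)}{u_0^{\,k+1}},\qquad k=1,\ldots,n,
\]
a standard determinant manipulation (factor $u_0$ out of the first column of $W(u_0,\ldots,u_k)$, subtract suitable multiples of that column from the others, then differentiate rows). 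By hypothesis the right-hand side never vanishes, so $(v_0,\ldots,v_{n-1})$ satisfies the Wronskian condition for index $n-1$ and hence, by the induction hypothesis, is an $ECT$-system on $L$. Now let $f=\sum_{i=0}^k a_iu_i$ be any nontrivial combination with $1\le k\le n$, and let $Z$ be its number of zeros on $L$ counted with multiplicity. Since $u_0$ is smooth and nonvanishing, $f/u_0$ has the same zeros as $f$ with the same multiplicities, so by the version of Rolle's theorem that tracks multiplicities the function $\big(f/u_0\big)'=\sum_{i=1}^k a_iv_{i-1}$ has at least $Z-1$ zeros counted with multiplicity. But $\sum_{i=1}^k a_iv_{i-1}$ is a nontrivial combination of $v_0,\ldots,v_{k-1}$, hence has at most $k-1$ such zeros; therefore $Z\le k$. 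Together with the case $k=0$ (as in the base step), this shows $(u_0,\ldots,u_n)$ is an $ECT$-system.

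I expect the points requiring care to be, first, the correct bookkeeping of multiplicities in the Rolle step — one must use that if $g\in\mathcal{C}^1$ has $N$ zeros on $L$ counted with multiplicity then $g'$ has at least $N-1$ of them, including that a zero of order $m$ of $g$ is a zero of order $m-1$ of $g'$ — and, second, a clean derivation of the Wronskian–reduction identity, where the power of $u_0$ and the signs are easy to mismanage; I would verify it by a short separate induction on $k$, checking the case $k=1$ (where it reduces to $v_0=W(u_0,u_1)/u_0^2$) explicitly.
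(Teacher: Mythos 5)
The paper offers no proof of this proposition: it is quoted as a classical result with pointers to Karlin--Studden and Marde\v{s}i\'{c}, so there is nothing in the text to compare against line by line. What you have written is the standard textbook argument for the Wronskian characterization of $ECT$-systems, and it is essentially correct: the ``only if'' direction by contraposition (a vanishing Wronskian at $x_0$ produces a nontrivial combination flat to order $k$ at $x_0$, hence a zero of multiplicity at least $k+1$), and the ``if'' direction by induction on $n$ using the reduction identity $W\big((u_1/u_0)',\ldots,(u_k/u_0)'\big)=W(u_0,\ldots,u_k)/u_0^{k+1}$ together with the multiplicity-tracking form of Rolle's theorem. Two small points deserve tightening. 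First, in the inductive step you assert that $\sum_{i=1}^k a_i v_{i-1}$ is a nontrivial combination of $v_0,\ldots,v_{k-1}$; this fails when $a_1=\cdots=a_k=0$, but in that case $f=a_0u_0$ is nowhere zero (since $u_0\ne0$ and $a_0\ne0$), so $Z=0\le k$ and the case should simply be split off. Second, in the ``only if'' direction the contradiction with the definition as stated in the paper (which counts \emph{isolated} roots with multiplicity) requires knowing that the order-$(k+1)$ zero at $x_0$ is isolated; you flag this parenthetically, and it is indeed handled in the cited references, but a fully self-contained proof would need to address it. Neither issue is a gap in the method; the argument is the standard one the paper implicitly relies on.
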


We will need the following lemma.

\begin{lema}\label{lemawronsk}
    Consider $\beta_i \in \re$ such that $\beta_0<\beta_1<\cdots<\beta_m$. Then the functions
     $(x^{\beta_0},\dots,x^{\beta_m})$ form an $ECT$-system on $(0,\infty).$
\end{lema}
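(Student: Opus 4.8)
The plan is to invoke the Wronskian characterization of Proposition~\ref{lemasistemaTintervaloaberto}. Each $x^{\beta_j}$ lies in $\mathcal{C}^\infty((0,\infty))$, so it suffices to check that for every $k\in\{0,1,\ldots,m\}$ the Wronskian $W_k(x):=W(x^{\beta_0},\ldots,x^{\beta_k})(x)$ is nowhere zero on $(0,\infty)$.

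First I would compute $W_k$ explicitly. Writing $(\beta)_i:=\beta(\beta-1)\cdots(\beta-i+1)$ for the falling factorial (with $(\beta)_0:=1$), one has $\frac{d^i}{dx^i}x^\beta=(\beta)_i\,x^{\beta-i}$, so the $(i,j)$-entry of the Wronskian matrix (rows and columns indexed from $0$ to $k$) equals $(\beta_j)_i\,x^{\beta_j-i}$. Pulling $x^{\beta_j}$ out of column $j$ and $x^{-i}$ out of row $i$ yields
\[
W_k(x)=x^{\,\beta_0+\cdots+\beta_k-\binom{k+1}{2}}\,\det\big[(\beta_j)_i\big]_{i,j=0}^{k}.
\]
The exponent of $x$ is a fixed real number and $x>0$, so $W_k(x)\ne0$ for all $x$ precisely when the numerical determinant $D_k:=\det[(\beta_j)_i]_{i,j=0}^k$ is nonzero.

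To evaluate $D_k$, the key remark is that expanding each falling factorial in the monomial basis as $(t)_i=\sum_{l=0}^{i}s_{il}\,t^l$ with $s_{ii}=1$ (the coefficients $s_{il}$ being Stirling numbers of the first kind) exhibits $[(\beta_j)_i]_{i,j}$ as the product $S\,V$, where $S=[s_{il}]_{i,l=0}^k$ is lower triangular with unit diagonal and $V=[\beta_j^{\,l}]_{l,j=0}^k$ is the Vandermonde matrix of $\beta_0,\ldots,\beta_k$. Hence
\[
D_k=\det S\cdot\det V=\prod_{0\le i<j\le k}(\beta_j-\beta_i),
\]
which is strictly positive since $\beta_0<\beta_1<\cdots<\beta_k$. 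Consequently $W_k(x)\ne0$ on $(0,\infty)$ for every $k$, and Proposition~\ref{lemasistemaTintervaloaberto} shows that $(x^{\beta_0},\ldots,x^{\beta_m})$ is an $ECT$-system on $(0,\infty)$.

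There is no genuine obstacle; the only point requiring a moment's care is the factorization $[(\beta_j)_i]=SV$, which is immediate once the falling factorials are written in the monomial basis. Alternatively, one may avoid it entirely by treating $D_k$ by induction on $k$, viewing it as a polynomial in the variable $\beta_k$ of degree $k$ that vanishes at $\beta_0,\ldots,\beta_{k-1}$ and has leading coefficient $D_{k-1}$, which again gives $D_k=D_{k-1}\prod_{i<k}(\beta_k-\beta_i)$.
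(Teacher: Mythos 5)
Your proof is correct and follows essentially the same route as the paper: both factor $x^{\sum\beta_j-\binom{k+1}{2}}$ out of the Wronskian and then reduce the remaining falling-factorial determinant to the Vandermonde determinant $\prod_{0\le i<j\le k}(\beta_j-\beta_i)$, the paper via explicit unit-triangular row operations and you via the equivalent factorization $[(\beta_j)_i]=SV$ with $S$ the Stirling matrix. No gap; your packaging of the row reduction as a matrix identity is just a cleaner way of writing the same computation.
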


\begin{proof} We claim that
\begin{equation}\label{formula}
    \begin{aligned}
    W=W(x^{\beta_0},\ldots,x^{\beta_k})=x^S\Big(\displaystyle\prod_{0\le i<j\le k}^k(\beta_j-\beta_i)\Big), \quad \mbox{where}\quad
    S=\sum_{i=0}^k \beta_i-
    \frac{k(k+1)}2.
    \end{aligned}
    \end{equation}
Then,  each $W(x^{\beta_0},\dots,x^{\beta_{k}}) \neq0$ in
$(0,\infty)$, for     $k=0,\dots,m$,
      and by Proposition \ref{lemasistemaTintervaloaberto} the functions
      $\big(x^{\beta_j}\big)_{j=0}^m$ form an $ECT$  on $(0,\infty)$
      as we wanted to prove.

Let us prove the claim. For $1\le k\in\N,$ set
$(\beta)_k=\beta(\beta-1)(\beta-2)\cdots (\beta-k).$ Then,
\begin{align*}
W=&\left| \begin{array}{ccc}
    x^{\beta_0} & \cdots & x^{\beta_k} \\
    \beta_0  x^{\beta_0-1} &  \cdots & \beta_k  x^{\beta_k-1} \\
    (\beta_0)_1  x^{\beta_0-2} &  \cdots & (\beta_k)_1
    x^{\beta_k-2}\\
    \vdots &  \ddots & \vdots \\
(\beta_0)_{k-1}  x^{\beta_0-k} &  \cdots & (\beta_{k})_{k-1}
x^{\beta_k-k}
    \end{array} \right|=x^S \left| \begin{array}{ccc}
    1 & \cdots & 1 \\
    \beta_0  &  \cdots & \beta_k   \\
    \beta_0(\beta_0-1)  &  \cdots & \beta_k(\beta_k-1)\\
    \vdots &  \ddots & \vdots \\
(\beta_0)_{k-1}  &  \cdots & (\beta_k)_{k-1}
    \end{array} \right|\\=&x^S \left| \begin{array}{ccc}
    1 & \cdots & 1 \\
    \beta_0  &  \cdots & \beta_k   \\
    \beta_0^2  &  \cdots & \beta_k^2\\
    \beta_0(\beta_0-1)(\beta_0-2)  &  \cdots & \beta_k(\beta_k-1)(\beta_k-2)\\
    \vdots &  \ddots & \vdots \\
(\beta_0)_{k-1}  &  \cdots & (\beta_k)_{k-1}
    \end{array} \right|=\cdots=x^S \left| \begin{array}{ccc}
    1 & \cdots & 1 \\
    \beta_0  &  \cdots & \beta_k   \\
    \beta_0^2  &  \cdots & \beta_k^2\\
    \beta_0^3  &  \cdots & \beta_k^3\\
    \vdots &  \ddots & \vdots \\
\beta_0^k  &  \cdots & \beta_k^k
    \end{array} \right|,
\end{align*}
where this last determinant is the celebrated Vandermonde
determinant and coincides with expression \eqref{formula}. Notice
that in the first equality we have used that taking the product of
$k+1$ elements of the determinant, being each one of them  elements
of different rows and columns, always appears $x^S$ as a factor.
Moreover, in the first equality of the second line of equalities we
have changed the third file by the sum of the second and third files
of the previous determinant. Similarly, we change the fourth file of
this new determinant by a suitable linear combinations of the
second, third and fourth ones, and so on, until arriving to the
final equality. So, the claim follows.
\end{proof}

A second key tool for proving Theorem \ref{th:main1} will be next
averaging type Theorem, proved in \cite{BuiLli2004}, which is
applicable to continuous differential systems. See the Appendix for
a short reminder about Brouwer topological degree.

\begin{teo}\label{teoaveraging}
   {\rm (Averaging theorem via Brouwer degree  (\cite{BuiLli2004})).} Consider the system of differential equations
    \begin{equation}\label{sistemaformapadraoaveraging}
    x'(t) = \varepsilon H(t,x) + \varepsilon^2 K(t,x,\varepsilon),
    \end{equation}
    where $H: \re \times D \rightarrow \re^n, \; K: \re \times D \times (-\varepsilon_0,
    \varepsilon_0)\rightarrow \re^n$ are continuous functions, $ T $-periodic in the first variable,
    $ D $ is an open subset of $ \re ^ n $ and $ (- \varepsilon_0,
\varepsilon_0) $ is a neighborhood of $0\in\re$.  We define the
averaged function, $h: D \rightarrow \re^n$ as follow:
    \begin{equation}\label{funcaomediaaveraging}
    h(z) = \dfrac{1}{T} \displaystyle\int_0^T H(t,z)\,dt,
    \end{equation}
    and we assume that each $a\in D$ with $h(a) = 0$, there is a neighborhood $V$ of $a$ such that
     $h(z) \neq 0$ for all $z  \in \overline{V} \setminus \{a\}$ and Brouwer degree $d_B(h,V,0) \neq 0$. Then  for
      each $|\varepsilon| >0$ small enough, there is a $T$-periodic solution $\varphi(\cdot, \varepsilon)$
      of system (\ref{sistemaformapadraoaveraging}) such that $\varphi(\cdot, \varepsilon) \rightarrow a$  when $\varepsilon \rightarrow 0$.
\end{teo}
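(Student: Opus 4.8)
The plan is to recast the search for $T$-periodic solutions as the search for zeros of a finite-dimensional displacement map, to identify the averaged function $h$ as the leading $\varepsilon$-order term of this map, and finally to invoke the homotopy invariance of Brouwer degree recalled in the Appendix. First I would fix a compact neighborhood $\overline{V}\subset D$ of a zero $a$ of $h$ as in the hypothesis, on which $H$ and $K$ are bounded. Since the right-hand side of \eqref{sistemaformapadraoaveraging} is of size $O(\varepsilon)$, Peano's theorem provides, for each initial condition $x(0)=z\in \overline V$ and each small $|\varepsilon|$, at least one solution $x(\cdot,z,\varepsilon)$ on $[0,T]$, and the a priori bound $\|x(t,z,\varepsilon)-z\|=O(\varepsilon)$ holds uniformly in $z$ and in the chosen solution. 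Integrating \eqref{sistemaformapadraoaveraging} over $[0,T]$ gives the displacement
\[
x(T,z,\varepsilon)-z=\varepsilon\int_0^T H(s,x(s,z,\varepsilon))\,ds+\varepsilon^2\int_0^T K(s,x(s,z,\varepsilon),\varepsilon)\,ds,
\]
and a $T$-periodic solution corresponds exactly to a zero in $z$ of this expression. Using the uniform estimate $x(s,z,\varepsilon)=z+O(\varepsilon)$ together with the uniform continuity of $H$ on compacts, I would show that $\varepsilon^{-1}\big(x(T,z,\varepsilon)-z\big)\to T\,h(z)$ as $\varepsilon\to 0$, uniformly on $\overline V$.

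The genuine difficulty, and the point where the argument departs from the classical smooth averaging theorem, is that $H$ and $K$ are only continuous, so solutions of \eqref{sistemaformapadraoaveraging} need not be unique and the time-$T$ map $z\mapsto x(T,z,\varepsilon)$ is in general not single valued; hence Brouwer degree cannot be applied to it directly. To circumvent this I would approximate: choose locally Lipschitz vector fields $H_k\to H$ and $K_k\to K$ converging uniformly on $\overline V\times[0,T]$ and in the parameter $\varepsilon$. For each $k$ uniqueness holds, so the associated displacement $F_k(z,\varepsilon)$ is a genuine continuous function of $(z,\varepsilon)$, and the convergence $\varepsilon^{-1}F_k(z,\varepsilon)\to T\,h(z)$ holds uniformly in $z$ and uniformly in $k$, the errors being controlled by $\varepsilon$ and by $\|H_k-H\|$ and $\|K_k-K\|$ on $\overline V\times[0,T]$.

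With a single-valued continuous map in hand I would close the argument by degree theory. Define $G_k(z,\varepsilon)=\varepsilon^{-1}F_k(z,\varepsilon)$ for $\varepsilon\neq0$ and $G_k(z,0)=T\,h_k(z)$, where $h_k$ is the average of $H_k$; this is continuous on $\overline V\times(-\varepsilon_0,\varepsilon_0)$, and by the previous step $G_k(\cdot,\varepsilon)$ is uniformly close to $T\,h$ on $\overline V$ once $k$ is large and $|\varepsilon|$ small. Since $h$ does not vanish on $\partial V$ and $d_B(h,V,0)\neq0$, the straight-line homotopy between $G_k(\cdot,\varepsilon)$ and $T\,h$ stays nonzero on $\partial V$, so homotopy invariance yields $d_B(G_k(\cdot,\varepsilon),V,0)=d_B(T\,h,V,0)=d_B(h,V,0)\neq0$, the last equality because $T>0$. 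Consequently $G_k(\cdot,\varepsilon)$ has a zero $z_{k,\varepsilon}\in V$, which produces a $T$-periodic solution of the $k$-th approximating system.

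Finally I would let $k\to\infty$ with $\varepsilon$ fixed: the corresponding periodic orbits are uniformly bounded and equicontinuous, since their right-hand sides are $O(\varepsilon)$, so by Arzel\`a--Ascoli a subsequence converges uniformly to a function $\varphi(\cdot,\varepsilon)$, and passing to the limit in the integral formulation (using $H_k\to H$, $K_k\to K$ uniformly) shows that $\varphi(\cdot,\varepsilon)$ solves the original system and is $T$-periodic, with $\varphi(0,\varepsilon)\in\overline V$. As $\varepsilon\to0$ the displacement relation forces $h(\varphi(0,\varepsilon))\to0$; since $a$ is the unique zero of $h$ in $\overline V$, this gives $\varphi(0,\varepsilon)\to a$, and the $O(\varepsilon)$ bound then yields $\varphi(\cdot,\varepsilon)\to a$ uniformly in $t$, as claimed. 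I expect the main obstacle to be precisely the non-uniqueness of solutions for merely continuous data: the approximation device must be arranged so that the degree estimate is uniform in $k$ and so that uniform limits of the approximating periodic orbits remain periodic orbits of the limit system.
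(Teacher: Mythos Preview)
The paper does not contain a proof of this statement. Theorem~\ref{teoaveraging} is quoted as a preliminary result from \cite{BuiLli2004} (see the sentence preceding it: ``next averaging type Theorem, proved in \cite{BuiLli2004}''), and the paper simply invokes it as a black box in the proof of Theorem~\ref{th:main1}. So there is no in-paper argument to compare your proposal against.

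That said, your sketch is a faithful outline of the strategy actually used in \cite{BuiLli2004}: reduce the search for $T$-periodic solutions to zeros of the displacement map $z\mapsto x(T,z,\varepsilon)-z$, expand this map as $\varepsilon T\,h(z)+o(\varepsilon)$ uniformly on $\overline V$, and then use homotopy invariance of the Brouwer degree together with the hypothesis $d_B(h,V,0)\ne0$ to locate a zero. Your identification of the non-uniqueness issue for merely continuous right-hand sides as the crux is accurate; the original paper handles it by working with a set-valued Poincar\'e translation operator and an appropriate degree-continuation argument rather than by Lipschitz approximation, but your approximation-plus-Arzel\`a--Ascoli route is a legitimate alternative and would work with the uniform-in-$k$ estimates you describe. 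One point to tighten: you should check that the limit $\varphi(\cdot,\varepsilon)$ obtained by Arzel\`a--Ascoli actually has $\varphi(0,\varepsilon)\in V$ (not merely $\overline V$), or else argue separately that no zero of $h$ lies on $\partial V$ so that the limiting initial condition cannot escape to the boundary as $\varepsilon\to0$; your hypothesis ``$h(z)\neq0$ on $\overline V\setminus\{a\}$'' already gives this.
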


\begin{obs}
    Theorem \ref{teoaveraging} shows that for each isolated solution $a$ of $h(z)=0$ in $D,$
    where $h$ is given in \eqref{funcaomediaaveraging}, such that $d_B(f,V,0) \neq 0$,
there is, for $\varepsilon$ small enough,   a $T$-periodic orbit of
system \eqref{sistemaformapadraoaveraging} tending to $a$ when
$\varepsilon$ goes to $0.$ When $h$ is of class $\mathcal{C}^1$
these hypotheses about  the solution $a$ can simply be replaced by
$h'(a)\ne0.$
\end{obs}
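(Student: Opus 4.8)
The statement has two parts. The first is nothing more than a verbatim re-reading of Theorem~\ref{teoaveraging}, so I would dispatch it in one line by observing that an isolated zero $a$ of $h$ with $d_B(h,V,0)\neq0$ is exactly the configuration to which that theorem applies, yielding the $T$-periodic orbit tending to $a$. The genuine content is the second part: in the $\mathcal{C}^1$ category the hypotheses ``$a$ isolated and $d_B(h,V,0)\neq0$'' collapse to the single nondegeneracy condition $h'(a)\neq0$, by which we understand that the Jacobian matrix of $h$ at $a$ is nonsingular, that is $\det h'(a)\neq0$.

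My plan is to derive both the isolation of $a$ and the nonvanishing of the degree from this one condition, using two classical ingredients. First I would invoke the Inverse Function Theorem: since $h$ is $\mathcal{C}^1$ and $\det h'(a)\neq0$, there is an open neighborhood of $a$ on which $h$ is a diffeomorphism onto its image. Choosing a small closed ball $\overline V\subset D$ inside this neighborhood, the injectivity of $h|_{\overline V}$ forces $h(z)\neq h(a)=0$ for all $z\in\overline V\setminus\{a\}$. This already gives that $a$ is an isolated zero of $h$ and that the first structural requirement of Theorem~\ref{teoaveraging} is met.

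Next I would compute the Brouwer degree. With $V$ chosen as above, $0$ is a regular value of $h|_V$: its only preimage in $V$ is $a$, and there $\det h'(a)\neq0$. The regular-value formula for the Brouwer degree recalled in the Appendix then gives
\[
d_B(h,V,0)=\sum_{z\in h^{-1}(0)\cap V}\sgn\big(\det h'(z)\big)=\sgn\big(\det h'(a)\big)=\pm1,
\]
which is nonzero. Thus both hypotheses of the first part hold automatically, and the $T$-periodic orbit tending to $a$ is produced by that part. This is precisely the reduction asserted in the Remark.

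The argument involves no hard estimate; the only delicate point is the interface between the two ingredients, namely that the neighborhood supplied by the Inverse Function Theorem can be shrunk to a set $V$ with $\overline V$ compact, contained in $D$, and free of any other zero of $h$, so that the regular-value formula for $d_B(h,V,0)$ is legitimately applicable. This is immediate by taking a sufficiently small closed ball around $a$, so I expect no real obstacle beyond correctly invoking the standard degree-theoretic fact from the Appendix.
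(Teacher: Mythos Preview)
Your proposal is correct and matches the paper's approach: the paper states the Remark without a dedicated proof, delegating the $\mathcal{C}^1$ reduction to the final Lemma of the Appendix, whose content is precisely your argument (Inverse Function Theorem for isolation, then the regular-value formula $d_B(h,V,0)=\sgn\det h'(a)=\pm1$). The only cosmetic point is that the Appendix's Definition~\ref{defigraudebrouwer} is phrased for $\mathcal{C}^\infty$ maps, so strictly speaking one passes through the approximation scheme for $\mathcal{C}^1$ maps; but the paper's own Appendix Lemma is stated directly for $\mathcal{C}^1$, so this is already absorbed.
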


The key result for proving the second part  of Theorem
\ref{th:main2} will be the following theorem.

\begin{teo}\label{th:lienard} {\rm (\cite{AlvColDePro2020})} There is a sequence of naturals numbers $n,$ tending to
infinity, such that for these values of $n$ there exist  generalized
Li\'{e}nard systems
\begin{equation*}
\left\{ \begin{array}{l}
\dot x=y-F(x), \\
\dot y = G(x),
\end{array} \right.
\end{equation*}
with $F$ and $G$ are polynomials of degree at most $n,$ having at
least $K(n)$ limit cycles, where
\[
K(n)=\Big(\frac {n\log n}{\log 2}\Big)(1+o(1)).
\]
\end{teo}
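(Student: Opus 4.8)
The plan is to reduce the existence of a limit cycle to the existence of a simple zero of a first--order averaged function, and then to force such zeros to appear in number $\sim n\log n$ by an iterated, degree--doubling renormalisation in the spirit of Christopher and Lloyd \cite{ChrLlo1995}. First I would write the generalized Li\'enard system as a small perturbation of a center: after the standard Li\'enard coordinates and a rescaling that introduces a small parameter $\varepsilon$, the field is brought to the form \eqref{sistemaformapadraoaveraging}, so that Theorem~\ref{teoaveraging} applies and each simple positive zero $r_\ast$ of the averaged function
\[
M(r)=\frac{1}{2\pi}\int_0^{2\pi}\Big(\text{radial component of } (F,G) \text{ along } x^2+y^2=r^2\Big)\,d\theta,
\]
with $M'(r_\ast)\neq 0$, produces for small $\varepsilon$ a hyperbolic limit cycle near that circle. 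Since $M$ is linear in the coefficients of $F$ and $G$, it is a combination of explicit moment functions of $r$, and the $ECT$ property of power families recorded in Lemma~\ref{lemawronsk} is exactly the tool that lets one place a prescribed number of \emph{simple} zeros inside a chosen amplitude window. Carried out at a single scale, this yields only the \emph{linear} base bound $K(n)\ge\Omega(n)$ of hyperbolic limit cycles localised in a fixed annulus.

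The linear bound is not enough, and the extra $\log n$ factor is the crux. For this I would build a renormalisation scheme exploiting the quasi--homogeneity of the Li\'enard field under the dilation $(x,y)\mapsto(\lambda x,\lambda y)$, which rescales amplitudes and maps the class of degree--$n$ systems into itself. Taking $\lambda=2$ --- the source of the $\log 2$ in the denominator of $K(n)$ --- I would, starting from a champion $S_n$ of degree $n$ carrying $K(n)$ hyperbolic limit cycles inside an annulus $A$, construct a system $S_{2n}$ of degree $2n$ that contains two geometrically separated dilated copies of the dynamics on $A$ (contributing $2K(n)$ cycles), together with a fresh batch of $\Omega(n)$ simple zeros bifurcating in the transition region between the two scales via the $ECT$ mechanism above. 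Provided the two scales are chosen far enough apart as $n\to\infty$, the averaged functions of the three blocks are asymptotically decoupled, each dominating in its own annulus, so the zeros persist and one obtains the recursion
\[
K(2n)\ \ge\ 2\,K(n)+\Omega(n).
\]
Iterating along $n=2^m$ and solving this recursion gives
\[
K(n)\ \ge\ \big(n\log_2 n\big)\big(1+o(1)\big)=\Big(\frac{n\log n}{\log 2}\Big)\big(1+o(1)\big),
\]
which is the claimed bound, valid along the sequence of $n$ for which the doubling can be performed cleanly.

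I expect the renormalisation step, rather than the reduction to $M$ or the base $ECT$ count, to be the technical heart of the argument, and the main obstacle is threefold. One must realise the two dilated copies \emph{plus} the extra batch within degree exactly $2n$, which forces a tight accounting of the available coefficients of $F$ and $G$; one must prove the quantitative decoupling across scales, controlling the cross--interaction of the moment functions well enough that every zero survives and remains simple; and one must verify that simplicity is preserved so that, by the $\mathcal{C}^1$ version of Theorem~\ref{teoaveraging}, each of the $K(n)$ periodic orbits is a hyperbolic limit cycle. Making these three points precise is precisely the content of \cite{AlvColDePro2020}, and it parallels the degree--doubling philosophy behind the $O(n^2\log n)$ bound of \cite{ChrLlo1995}.
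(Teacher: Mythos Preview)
The present paper does not prove Theorem~\ref{th:lienard}: it is quoted verbatim from \cite{AlvColDePro2020} and used as a black box in the proof of Theorem~\ref{th:main2}. So there is no ``paper's own proof'' to compare with; the relevant comparison is with the argument actually in \cite{AlvColDePro2020}. Your final sentence, that ``making these three points precise is precisely the content of \cite{AlvColDePro2020},'' is not accurate: that paper does \emph{not} proceed via first--order averaging, $ECT$ families of moments, or a Christopher--Lloyd dilation scheme. As its title indicates, the mechanism there is \emph{canard cycles} in slow--fast generalized Li\'enard systems: one works in a singular perturbation regime where the critical curve $y=F(x)$ has many turning points, and the limit cycles are produced and counted by controlling the slow divergence integral along canard trajectories, following the De~Maesschalck--Dumortier theory. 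The $\log n$ gain comes from an inductive nesting of slow--fast blocks in that geometric setting, not from a rescaling of an averaged function.

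Beyond the misattribution, your route has a genuine gap at the renormalisation step. First--order averaging of a degree--$n$ Li\'enard perturbation of a linear center yields an averaged function that is a linear combination of at most $O(n)$ power functions of $r$; by Lemma~\ref{lemawronsk} this produces at most $O(n)$ simple zeros, and hence only the linear base bound --- which you acknowledge. The doubling you propose, via $(x,y)\mapsto(\lambda x,\lambda y)$, does not embed two independent ``copies'' of a Li\'enard field inside a single Li\'enard system of degree $2n$: the structure $\dot x=y-F(x)$, $\dot y=G(x)$ is rigid (the right--hand sides depend on $x$ alone except for the single $y$), so one cannot localise two decoupled annular dynamics by choosing coefficients as one can in the unrestricted polynomial setting of \cite{ChrLlo1995}. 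Without a concrete construction that respects this rigidity, the recursion $K(2n)\ge 2K(n)+\Omega(n)$ is unsupported, and the argument does not go through as written. If you want a self--contained proof, the correct starting point is the slow--fast/canard framework of \cite{AlvColDePro2020} rather than Theorem~\ref{teoaveraging}.
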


We also will need the following result about non-existence of limit
cycles.

\begin{prop}\label{le:lema} Systems
\[
(\dot x,\dot y)= \big(ax^py^q, bx^iy^j+cx^ky^l\big),
\]
where $(a,b,c)\in\R^3$ and $(p,q,i,j,k,l)\in\N_0^6,$ with
$\N_0=\N\cup\{0\},$ have no limit cycle.
\end{prop}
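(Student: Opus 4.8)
The plan is to show that any such system has no closed orbit by exhibiting an invariant structure that a periodic orbit could not encircle, splitting into cases according to which of the exponents vanish. First I would note that the axes $\{x=0\}$ and $\{y=0\}$ are invariant for the second component only when the corresponding power of $x$ or $y$ is positive; more usefully, the first equation $\dot x = ax^py^q$ controls the monotonicity of $x(t)$ along any orbit. The key observation is that on a periodic orbit $x(t)$ must return to its initial value, so $\dot x$ must change sign along the orbit. If $a=0$ the system is $\dot x=0$, every orbit lies in a vertical line, and there is trivially no limit cycle, so assume $a\neq0$. If $p$ is even and $q$ is even, then $ax^py^q$ has a constant sign off the axes, so $x(t)$ is monotone on any orbit not meeting $\{xy=0\}$, forbidding a closed orbit in each open quadrant; a closed orbit through a point of $\{xy=0\}$ is likewise excluded because there $\dot x=0$ forces the orbit to stay on that axis (or at an equilibrium). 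The remaining cases are handled the same way: the sign of $\dot x$ is determined by $\sgn(x)^p\,\sgn(y)^q$ (up to the fixed sign of $a$), which is constant on each open quadrant, so in every case $x$ is strictly monotone along any orbit contained in an open quadrant, ruling out a periodic orbit there.

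It then remains to rule out periodic orbits that cross the coordinate axes. Here I would argue as follows. Suppose $\gamma$ is a periodic orbit. Since $\dot x=ax^py^q$ vanishes identically on $\{x=0\}\cup\{y=0\}$ (assuming $p\ge 1$ or $q\ge 1$; the case $p=q=0$ gives $\dot x=a\neq 0$ everywhere, so no orbit can close), any point of $\gamma$ lying on an axis has $\dot x=0$ there, and moreover the vector field at such a point is tangent to that axis (the first component vanishes). Invariance of each axis under the full vector field — which holds because the second component also contains only nonnegative powers of the vanishing variable, hence vanishes or at least keeps the flow on the axis when that variable is $0$ — needs a short check: on $\{x=0\}$ we have $\dot x=0$ and $\dot y = b\cdot0^i\,y^j + c\cdot 0^k\,y^l$ which is either identically zero or a monomial in $y$, so the axis is invariant; similarly for $\{y=0\}$. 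Thus an orbit touching an axis stays on it forever, and a one-dimensional orbit on a line is either an equilibrium, an unbounded trajectory, or a heteroclinic/homoclinic segment between equilibria — never a limit cycle. Combining the two parts, $\gamma$ cannot exist.

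The main obstacle I anticipate is the careful bookkeeping of degenerate exponent patterns, especially the borderline cases $p=0$ or $q=0$ (where $\dot x$ may fail to vanish on one axis) and the verification that the axes are genuinely invariant for the \emph{second} component in all $2^3$ sign-combinations of the triple of exponents $(i,j),(k,l)$. Concretely, if $i=0$ then $bx^iy^j=by^j$ does not vanish on $\{x=0\}$, but the first component $ax^py^q$ still vanishes there provided $p\ge 1$, so the line $\{x=0\}$ remains invariant; the only scenario where $\{x=0\}$ fails to be invariant is $p=0$, and then $\dot x=ay^q$ has constant sign on each open half-plane $\{y>0\}$, $\{y<0\}$ (for $a\neq 0$), which again forces $x$ monotone and kills any closed orbit — so no separate axis argument is even needed. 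I would organize the write-up as: (i) dispose of $a=0$; (ii) observe $\sgn \dot x$ is constant on each open quadrant, hence no closed orbit lies in $\R^2\setminus\{xy=0\}$; (iii) show any orbit meeting $\{xy=0\}$ is confined to a line and thus is not a limit cycle; conclude.
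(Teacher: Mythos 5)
Your reduction is sound up to a point: the case $a=0$ is correctly dismissed, the sign of $\dot x=ax^py^q$ is indeed constant on each open quadrant so no closed orbit can lie entirely in $\R^2\setminus\{xy=0\}$, and the line $\{x=0\}$ is genuinely invariant whenever $p\ge 1$ (the first component vanishes there, so the field is tangent to that line). The gap is in your step (iii), and it is not a bookkeeping issue but a missing idea. First, ``similarly for $\{y=0\}$'' is wrong: invariance of $\{y=0\}$ is governed by the \emph{second} component, $\dot y|_{y=0}=b\,x^i0^j+c\,x^k0^l$, which is not identically zero as soon as $j=0$ or $l=0$ with the corresponding coefficient nonzero. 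That $\dot x$ vanishes on $\{y=0\}$ (when $q\ge1$) only means orbits cross that axis vertically; it does not confine them to it. Second, your claim that for $p=0$ the constant sign of $\dot x=ay^q$ on each half-plane forces $x$ to be monotone along a closed orbit fails when $q$ is odd: the sign flips across $\{y=0\}$, so $x$ increases on one side and decreases on the other, which is exactly what a closed orbit needs. The linear center $\dot x=-y$, $\dot y=x$ belongs to the family ($p=0$, $q=1$, $i=1$, $j=0$, $c=0$) and all of its orbits are closed and cross both axes, so your trichotomy (i)--(iii) simply does not cover them; the same happens for orbits confined to $\{x>0\}$ but crossing $\{y=0\}$, e.g.\ $\dot x=-xy$, $\dot y=x-x^2$.

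This is precisely the hard case of the proposition: for systems such as $(\dot x,\dot y)=(ay^q,\,bx^i+cx^ky^l)$ periodic orbits crossing the axes really do exist, and the content of the statement is that they are never \emph{isolated}. Ruling that out requires a dynamical argument beyond monotonicity. The paper first normalizes exponents by dividing out powers of $x$ and $y$ (which preserves orbits off the axes), observes that a periodic orbit surrounding the origin forces $q$ and $i$ odd and $ab<0$, and then excludes limit cycles by time-reversibility with respect to $(x,y,t)\mapsto(x,-y,-t)$ or $(x,y,t)\mapsto(-x,y,-t)$ when $l$ is even or $k$ is odd (so every periodic orbit lies in a continuum), and by the divergence criterion ($\operatorname{div}=cl\,x^ky^{l-1}$ has constant sign) when $l$ is odd and $k$ is even; first integrals and invariant lines handle several degenerate subcases. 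Some argument of this kind is indispensable, and without it your proof is incomplete.
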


\begin{proof} We will use the following well-known properties for proving
non-existence of limit cycles:

\begin{itemize}
\item [$P_1$:] Periodic orbits must surround some critical point. So
systems without critical points have no periodic orbit.

\item[$P_2$:] If a system has an invariant line passing by all its
critical points, if any, then it has no periodic orbits. This is so
by property $P_1$ if the system has no critical points, or,
otherwise, by uniqueness of solutions, because an eventual periodic
orbit would surround some of the critical points and as a
consequence, cut the line.

\item[$P_3$:] If one of the two differential equations only involves ones of
the variables (for instance $\dot x=f(x)$) then the system has no
periodic orbits. This is so, because autonomous one dimensional
ordinary differential equations have no non-constant periodic
solution.

\item[$P_4$:] If a planar system has a smooth
first integral defined on an open set $\mathcal{U}\subset\R^2,$
although it can have continua of periodic orbits, it can not have
limit cycles entirely contained in  $\mathcal{U}.$

\item[$P_5$:] If the divergence of a planar system $(\dot x,\dot
y)=(P(x,y),Q(x,y))$, $\operatorname{div} (P,Q)= \frac{\partial
P(x,y)}{\partial x}+\frac{\partial Q(x,y)}{\partial y}$ does not
change sign and vanishes only on sets of zero Lebesgue measure, then
the system does not have periodic orbits.

\item[$P_6$:] Let $X$ be a planar vector field with a unique
critical point, $(0,0),$ and assume that it is reversible, that is,
invariant by one of the two changes of variables and time:
\[
(x,y,t)\longrightarrow (-x,y,-t)\quad \mbox{or} \quad
(x,y,t)\longrightarrow (x,-y,-t).
\]
If the system has a periodic orbit that crosses transversally the
axes then it is in the interior  of a continua of periodic orbits
and it is not a limit cycle. This is so, because any of  the
described symmetries  implies that if an orbit turns around the
origin it is periodic. Sometimes this criterion is called {\it
reversibility criterion of Poincar\'{e}}, because he was the first in
using it for proving the existence of periodic orbits.
\end{itemize}

When $a=0,$  the system can not have periodic orbits because of
property $P_3.$ When $bc=0,$ we assume, for instance, that $c=0$ and
$b\ne0,$ because when $c\ne0$ and $b=0$ the situation is the same
and the case $b=c=0$ is trivial. Then, two situations may happen:
either $x=0$ or $y=0$ are a continuum of critical points and no
other critical points appear or it writes as $(\dot x,\dot y)=
(ay^q, b x^i).$ In the first case the only critical points belong to
an invariant line full of critical points, so the system can not
have periodic orbits by property $P_2.$ In the second case the
system is integrable  with $\mathcal{U}=\R^2$ and by property $P_4$
no limit cycle appears.

Hence, from now on, we will assume that $abc\ne0.$ Next step will
use that the phase portraits of any two systems of the form
\[
(\dot x,\dot y)=(P(x,y)R(x,y),Q(x,y)R(x,y))\quad\mbox{and}\quad
(\dot x,\dot y)=(P(x,y),Q(x,y)),
\]
are the same (modulus a change of time orientation) in each
connected component of $\R^2\setminus\{R(x,y)=0\}.$ We will take $R$
as some suitable polynomial of one of the forms $x^n$  or $y^n$ to
reduce our  study to simpler vector fields. Taking $R(x,y)=x^s$ for
$s$ to be the minimum of $p,i$ and $k$ we can reduce the situation
to one of the next two differential systems:
\begin{equation}\label{eq:2cases}
(\dot x,\dot y)= \big(ay^q, bx^iy^j+cx^ky^l\big),\, i\ge 1
\quad\mbox{or}\quad (\dot x,\dot y)= \big(ax^py^q,
by^j+cx^ky^l\big),
\end{equation}
where for simplicity we keep the same notation for the new exponents
and without loss of generality we have assumed that $i\le k.$ Next
we take $R(x,y)=y^u$ with $u$ being the minimum of $q,j$ and $l.$
Finally, we only need to study the following five cases:
\begin{align*}
&(i) \quad (\dot x,\dot y)= \big(a, bx^iy^j+cx^ky^l\big),\, i\ge1,
&&(ii) \quad (\dot
x,\dot y)= \big(ay^q, bx^i+cx^ky^l\big),\, i\ge 1,\, q\ge 1,\\
&(iii) \quad(\dot x,\dot y)= \big(ax^p, by^j+cx^ky^l\big),
&&(iv)\quad (\dot x,\dot y)= \big(ax^py^q, b+cx^ky^l\big),\\
&(v) \quad (\dot x,\dot y)= \big(ax^py^q, by^j+cx^k\big), &&
\end{align*}
where we also keep the old notation for the new exponents. Notice
that $(i)$ and $(ii)$ come from the first differential equations of
\eqref{eq:2cases} and the other three cases from the second one.

 The case
$(i)$ has no critical point, so it has no periodic orbit by property
$P_1.$

In case $(ii),$ when  $l=0$ we can apply property $P_4$ with
$\mathcal{U}=\R^2$ because the system has a polynomial first
integral.

When   $l\ne0$ the system has a unique critical point $(0,0)$ and it
writes as
\begin{equation}\label{eq:c2}
(\dot x,\dot y)=(ay^q,bx^i+c x^ky^l),\quad i\ge1,\, q\ge1,\, l\ge 1.
\end{equation}

 Notice that studying the vector field on the axes we get
\[
\dot x\big|_{x=0}= a y^q \quad\mbox{and}\quad \dot y\big|_{y=0}=b
x^i.
\]
Since a periodic orbit must surround the origin, the above
conditions imply that this is only possible when $q$ and $i$ are
both odd numbers and $ab<0.$ So, in this case we will assume that
these conditions hold because otherwise the system has not periodic
orbits.

If $l$ is even the system is invariant by the change
$(x,y,t)\longrightarrow (x,-y,-t)$ and by property $P_6$ the system
has no limit cycle and  we are done. If $k$ is odd, then the system
is invariant by the change $(x,y,t)\longrightarrow (-x,y,-t)$ and
again by property $P_6$ we are done. Hence it only remains to
consider the case $l$ odd and $k$ even. Notice that
\[
\operatorname{div}(X)= c l x^k y^{l-1},
\]
and then it does not change sign and only vanishes on $\{xy=0\},$ or
on one subset of $\{xy=0\}.$ Hence by property $P_5$ the system has
no periodic orbit.

In case $(iii),$  we use property $P_3$.

In case $(iv)$ when $pqkl\ne0$ we can apply property $P_1.$ Also,
when $p=q=0$ we can apply property $P_1.$ Next we split the study
according one of the variables $p,q,k$ or $l$ vanishes and taking
into account that  $p^2+q^2\ne0.$

Assume that $p=0.$ Then $q\ne0.$ When $l\ne0$ we can apply again
property $P_1.$ When $l=0$ we can apply property $P_4$  because the
system has a polynomial first integral.

Assume that $q=0.$ Then the first equation of the system is $\dot
x=ax^p$ and we can apply property $P_3.$

Assume that $k=0.$ Then the second equation of the system is $\dot
y=b+cy^l$ and we can apply again property $P_3.$

Finally, assume that $l=0.$ When $p=0$ the system has a polynomial
first integral and we can apply property $P_4$ with
$\mathcal{U}=\R^2.$ When $p\ne0,$ the system has the invariant line
$\mathcal{L}=\{x=0\},$ and it can be integrated by separating the
variables, giving an smooth first integral in $\R^2\setminus
{\mathcal L}.$ Then we can apply again property $P_4$ to each of the
connected components of $\R^2\setminus {\mathcal L}$ and prove the
non-existence of limit cycles because $\mathcal L$ is also invariant
and eventual limit cycles can not cut it.

Finally we study case $(v)$. When $q=0$ by property $P_3$ no
periodic orbit appears. We consider four diferent subcases that
cover all the situations.

When $q\ne0,$ $p=0$ and $j=0$ the system has a polynomial first
integral and by property $P_4$ we are done.

When $q\ne0,$ $p=0$ and $j\ne0$ the system writes as
\begin{equation}\label{eqcaso5}
(\dot x,\dot y)= (ay^q, by^j+c x^k),\quad q\ge1,\, j\ge 1.
\end{equation}
If $k=0$ in \eqref{eqcaso5} then  we use property $P_3$.  The case $k \neq 0$ in \eqref{eqcaso5} we notice that,
by changing the names of same of the parameters it
coincides with the system \eqref{eq:c2} studied in case $(ii)$
taking in that system $k=0.$ Hence, again this system has no limit
cycle.

When $q\ne0,$ $p\ne0$ and $j=0$ the system has once more the
invariant line $\mathcal{L}=\{x=0\},$ and it can be integrated by
separating the variables, giving an smooth first integral in
$\R^2\setminus {\mathcal L}.$ As in the similar previous situation,
we can prove that it has no limit cycles by using property $P_4.$

In the remaining  case $q\ne0,$ $p\ne0$ and $j\ne0.$ Then the
$(0,0)$ is its unique critical point and $x=0$ is an invariant line.
By property $P_2$ it has no periodic orbit.

Hence we have proved that although sometimes the system has continua
of periodic orbits it has not limit cycles, as is stated in the
lemma.
\end{proof}

\section{Proof of Theorem 1.1}

     In
    order to find periodic orbits for the continuous planar
    differential system
    \[
(\dot x,\dot y)=  (-y,x)+\varepsilon \sum_{j=0}^n b_j X_j(x,y),
    \]
we will apply
    the averaging method via Brouwer degree given by the Theorem
    \ref{teoaveraging}.
Notice that we haven taken ${\bf a}=\varepsilon {\bf b}$ in the
expression \eqref{eq:main} and $\varepsilon$ is a small parameter.
As usual, we write the system in polar coordinates $x=r\cos \theta,$
$y=r\sin\theta,$ see for instance \cite{BuiLli2004}. We get
\begin{align*}
\dot r =& \varepsilon \sum_{j=0}^n b_j \big(x f_j(x,y)+ y
g_j(x,y)\big)=\varepsilon\sum_{j=0}^n b_j F_j(\theta)r^{\alpha_j},
\\
\dot \theta =& 1+ \varepsilon \sum_{j=0}^n b_j \big(x g_j(x,y)-y
f_j(x,y)\big)=1+ \varepsilon \sum_{j=0}^n b_j G_j(\theta)
r^{\alpha_j-1},
\end{align*}
where
\begin{align*}
F_j(\theta)=&f_j(\cos \theta,\sin \theta)\cos \theta + g_j(\cos
\theta,\sin \theta)\sin \theta,\\ G_j(\theta)=& g_j(\cos \theta,\sin
\theta)\cos \theta- f_j(\cos \theta,\sin \theta)\sin\theta.
\end{align*}
Finally, we have the differential equation
\begin{equation}\label{eq:pol}
\frac{dr}{d\theta}=r'=\frac{\varepsilon\sum_{j=0}^n b_j
F_j(\theta)r^{\alpha_j}}{1+ \varepsilon \sum_{j=0}^n b_j G_j(\theta)
r^{\alpha_j-1}}=\varepsilon\sum_{j=0}^n b_j
F_j(\theta)r^{\alpha_j}+O(\varepsilon^2).
\end{equation}
It is continuous for $(\theta,r)\in\R\times(0,R_0)$ for some $R_0>0$
and $\varepsilon$ small enough. We can easily compute the averaged
function $h$ given in Theorem \ref{teoaveraging}. We obtain
\[
h(z)= \frac1{2\pi}\int_0^{2\pi} \sum_{j=0}^n b_j
F_j(\theta)z^{\alpha_j}\, d\theta= \sum_{j=0}^n \frac{b_j}{2\pi}
\Big(\int_0^{2\pi} F_j(\theta)\, d\theta\Big)z^{\alpha_j}=
\sum_{j=0}^n \frac{b_j I_j}{2\pi} z^{\alpha_j}
\]
Since, from all $I_j,j=0,1,\ldots,n,$ only $m+1$ values are
non-zero, we rename the corresponding ordered $\alpha_j$ as
$\beta_0,\beta_1,\ldots,\beta_m$ and then
\[
h(z)=\sum_{j=0}^m c_j z^{\beta_j},
\]
with all $c_j$ arbitrary real constants  and
$\beta_0<\beta_1<\cdots<\beta_m.$ By Lemma \ref{lemawronsk} they
form an $ECT$-system on $(0,\infty).$ In particular, the maximum
number of  positive zeroes of $h$ is $m$ and there exist
$c_0,c_1,\ldots,c_m$ such that $h$ has exactly $m$ simple zeroes.
Notice that the upper bound of $m$ zeroes for $h$ is also a
straightforward consequence of Descarte's rule of signs. Taking the
corresponding values of ${\bf b}$ we obtain a system with
$|\varepsilon|$ small enough and at least $m$ periodic orbits. In
general we do not know yet that these periodic orbits are limit
cycles, that is, isolated among all the existing periodic orbits.
Nevertheless, because the right hand side of our differential equation \eqref{eq:pol}
 is continuous with respect to $\theta$
and diferenciable with respect to $r>0$ we can apply Theorem 3 of
\cite{BuiLliMak2015} that asserts that the obtained periodic orbits
are indeed limit cycles.

To prove the hyperbolicity in the smooth case it suffices to show
that in this regular setting the positive zeroes of the averaged
function $h$ coincide with the ones of the first order Melnikov
function. In \cite{BuiLli2004} this fact is proved in several
situations. We show this result again  for the $\mathcal{C}^1$
planar differential equations of the form
\begin{equation*}
(\dot x,\dot y)=(-y,x)+\varepsilon \big(P(x,y),Q(x,y)\big).
\end{equation*}
Recall that for this system, the Melnikov function $M$ writes as
\[
M(k)=\int_{x^2+y^2=k} P(x,y)\,dy-Q(x,y)\,dx, \quad 0<k\in\R,
\]
see for instance \cite{ChrLi2007}. By parameterizing the circles as
$x=\sqrt{k}\cos \theta,$ $y=\sqrt{k}\sin\theta$ we get that
$M(k)=\sqrt{k}h(\sqrt{k})$ and, as a consequence, the positive
simple zeroes of $h$ give rise to hyperbolic limit cycles of our
planar system for $|\varepsilon|$ small enough. Hence, the theorem
is proved.


\subsection{Examples of application}

As a first application we prove that the simple differential system
\begin{equation}\label{eq:lin}
\left( \begin{array}{c}
         \dot x\\
         \dot y
       \end{array}
\right)=    \left( \begin{array}{c}
         s_1\\
         s_2
       \end{array}
\right)  + \left( \begin{array}{cc}
                 q_{1,1} & q_{1,2} \\
                 q_{2,1} & q_{2,2}
               \end{array}
\right)  \left( \begin{array}{c}
         \surd(x)\\
         \surd(y)
       \end{array}
\right)  +   \left( \begin{array}{cc}
                 p_{1,1} & p_{1,2} \\
                 p_{2,1} & p_{2,2}
               \end{array}
\right)  \left( \begin{array}{c}
         x\\
         y
       \end{array}
\right)  ,
\end{equation}
where $\surd(z)=\sgn(z)\sqrt{|z|},$ has  for some values of the
parameters a limit cycle crossing $\Sigma=\{xy=0\}.$ This family
includes for instance the one given in Subsection \ref{ss:cap}.

In the  notation of the theorem, all systems of the form
\eqref{eq:lin} can be written as
\[
(\dot x,\dot y)= \sum _{j=0}^2 a_j X_j(x,y),
\]
where $X_0(x,y)=(s_1,s_2),$
$X_1(x,y)=(q_{1,1}\surd(x)+q_{1,2}\surd(y),q_{2,1}\surd(x)+q_{2,2}\surd(y))$
and $X_2(x,y)=(p_{1,1}x+p_{1,2}y,p_{2,1}x+p_{2,2}y).$ Moreover
$(\alpha_0,\alpha_1,\alpha_2)=(0,1/2,1).$  Notice that for
simplicity we keep the same names for the constants although they
have varied. Clearly,
\begin{align*}
I_0=&\int_0^{2\pi} (s_1\cos\theta+ s_2\sin\theta)\,d\theta=0,\\
I_2=&\int_0^{2\pi} (p_{1,1}\cos^2\theta+
(p_{1,2}+p_{2,1})\sin\theta\cos\theta+
p_{2,2}\sin^2\theta)\,d\theta=(p_{1,1}+ p_{2,2})\pi,\\
I_1=&4(q_{1,1}+q_{2,2})\int_0^{\pi/2} \cos^{3/2}\theta\,d\theta,
\end{align*}
where in the last equality we have used that
\begin{align*}
\displaystyle\int_0^{2 \pi} \surd(\cos \theta) \cos \theta \, d
\theta=&2\displaystyle\int_{-\frac{\pi}{2}}^{\frac{\pi}{2}}
\cos^{3/2}\theta\, d \theta=4 \displaystyle\int_0^{\frac{\pi}{2}}
\cos^{3/2}\theta \, d \theta>0,\\
\displaystyle\int_0^{2 \pi} \surd(\sin \theta) \sin \theta \, d
\theta=&4 \displaystyle\int_0^{\frac{\pi}{2}} \sin^{3/2}\theta \, d
\theta=4 \displaystyle\int_0^{\frac{\pi}{2}} \cos^{3/2}\theta \, d
\theta,
\end{align*}
and by symmetry,
\[
\displaystyle\int_0^{2 \pi} \surd(\sin \theta) \cos \theta \, d
\theta= \displaystyle\int_0^{2 \pi} \surd(\cos \theta) \sin \theta
\, d \theta=0.
\]

Thus when $(p_{1,1}+ p_{2,2})(q_{1,1}+ q_{2,2})\ne0,$ the number of
non-zero values in the list $I_0,I_1,I_2$ is $2$ and by Theorem
\ref{th:main1} we have a system of the form \eqref{eq:lin} with 1
limit cycle.

As a second example of application consider
\begin{equation}\label{eq:lin2}
\left( \begin{array}{c}
         \dot x\\
         \dot y
       \end{array}
\right)=    \left( \begin{array}{c}
         s_1\\
         s_2
       \end{array}
\right)  +   \left( \begin{array}{cc}
                 p_{1,1} & p_{1,2} \\
                 p_{2,1} & p_{2,2}
               \end{array}
\right)  \left( \begin{array}{c}
         x\\
         y
       \end{array}
\right)   + \left( \begin{array}{cc}
                 q_{1,1} & q_{1,2} \\
                 q_{2,1} & q_{2,2}
               \end{array}
\right)  \left( \begin{array}{c}
         \sqrt[3] x\\
         \surd(y)
       \end{array}
\right),
\end{equation}
where recall that $\surd(y)=\sgn(y)\sqrt{|y|}.$ We will prove that
it has at least 2 limit cycles crossing $\Sigma=\{xy=0\}$ for same
values of the parameters.

Writing it in the  notation of Theorem \ref{th:main1} we get
\[
(\dot x,\dot y)= \sum _{j=0}^3 a_j X_j(x,y),
\]
where $X_0(x,y)=(s_1,s_2),$
$X_1(x,y)=(q_{1,1}\sqrt[3]{x},q_{2,1}\sqrt[3]{x}),$
$X_2(x,y)=(q_{1,2}\surd(y),q_{2,2}\surd(y))$ and
$X_3(x,y)=(p_{1,1}x+p_{1,2}y,p_{2,1}x+p_{2,2}y).$ Moreover,
$(\alpha_0,\alpha_1,\alpha_2,\alpha_3)$ $=(0,1/3,1/2,1).$ Notice
that again, for simplicity, we keep the same names for the constants
although they have varied. In this case,
\[
I_0=0,\quad I_1= 4 q_{1,1}\displaystyle\int_0^{\frac{\pi}{2}}
\cos^{4/3}\theta \; d \theta, \quad I_2=4q_{2,2}\int_0^{\frac{\pi}{2}}
\sin^{3/2}\theta\,d\theta,\quad I_3=(p_{1,1}+ p_{2,2})\pi.
\]
where $I_0,I_2$ and $I_3$ are obtained similarly that in the
previous case  and to get $I_1$ we have used that
\begin{equation*}
\displaystyle\int_0^{2 \pi} \sqrt[3]{ \cos \theta} \cos \theta \; d
\theta=4 \displaystyle\int_0^{\frac{\pi}{2}}  \cos^{4/3}\theta \; d
\theta>0\quad\mbox{and}\quad \int_0^{2 \pi} \sqrt[3]{ \cos \theta}
\sin \theta \; d \theta=0.
\end{equation*}
Hence, when $q_{1,1}q_{2,2}(p_{1,1}+ p_{2,2})\ne0,$  the number of
non-zero values in the list $I_0,I_1,I_2,I_3$ is $3$ and by Theorem
\ref{th:main1} we have an example of system \eqref{eq:lin2} with at
least 2 limit cycles.

\section{Proof of Theorem \ref{th:main2}}
That $\mathcal{H}^M[j]=0,$ for $j=1,2,3,$ is a straightforward
consequence of Proposition \ref{le:lema}. Notice that this
proposition covers all cases except the trivial ones, where  either
$\dot x=0$ or $\dot y=0,$ and the right-hand side of the other
equation has $j$ monomials.

Let us prove that for $m\ge 4,$ $\mathcal{H}^M[m]\ge m-3.$ Consider
the Li\'{e}nard classic system in class $\mathcal{M}_m,$
\[
(\dot x,\dot y)=(y, -x+ a_0y+a_1y^3+\cdots +a_{m-3}y^{2m-5}).
\]
With the notation of Theorem \ref{th:main1} we get that for all
$j=0,1,\ldots,m-3,$
\[
I_j=\int_0^{2\pi} \sin^{2j+2}\theta\,d\theta>0,
\]
and as a consequence we get examples with $m-3$ limit cycles. In
fact, this system includes the celebrated van der Pol system when
$m=4$ and coincides with the example of classical Li\'{e}nard  system
studied in \cite{Lin1980}, where the author, with another notation,
already proved the existence of $m-3$ limit cycles.

Notice that there are many different families in $\mathcal{M}_m$
with at least $m-3$ limit cycles. For instance it suffices to
consider systems of the form
\[
(\dot x,\dot y)=(y, -x+
a_0x^{2n_0}y^{2k_0+1}+a_1x^{2n_1}y^{2k_1+1}+\cdots +
a_{m-3}x^{2n_{m-3}}y^{2k_{m-3}+1}),
\]
with  $n_j,k_j\in\N_0$ and all $2(n_j+k_j),$ $j=0,1,\ldots m-3,$
taking different values. Also similar terms could be added in the
first differential equation, removing some other ones from the 
second one.

To prove that $\mathcal{H}^M[m]\ge N(m)$ we  will use Theorem
\ref{th:lienard}. For a sequence of values of $n$ tending to
infinity, the number of monomials of these generalized Li\'{e}nard
systems is $m=2n+3$ while their number of limit cycles is at least
$K(n).$  Hence  these systems are in $\mathcal{M}_m$ and have at
least $N(m)=K((m-3)/2)$ limit cycles. This function is the one that
appears in the statement of the theorem.

\section*{Appendix: The Brouwer degree}

 In this appendix we include  a short introduction about  the Brouwer degree.   We will  simply present some
  definitions and aspects about it. For more details we recommend \cite{OutRui2009}.
   From now on, $V$ stands for a bounded open set in  $\re^n$, and $\partial V$ is the boundary of the $V$ set.
     Our purpose here is to define the degree of a continuous mapping $f:\overline{V} \rightarrow \re^n$.
      Thus we will do by the usual method: first we consider smooth mappings, and then we extend the
       definition to any given continuous mapping.

\begin{defi}
Consider $ V $ a non-empty, open and limited subset of $ \re^ n $.
We define $ \mathcal{C} ^ k (\overline{V}, \re^n) $
 as the space of the  $ k $-times continuously differentiable functions into $ \overline {V} $.  If the function
  is continuously differentiable for any $k$ then we said that the function is in  $\mathcal{C}^{\infty}(\overline{V},\re^n)$ or it is smooth.
\end{defi}

Note that, $\overline{V}$ being compact and $f: \overline{V}
\rightarrow \re^n$  a continuous mapping
 then we can define the norm
$$||f||=\max\{||f(x)|| \; ; \; x \in \overline{V}\}.$$

As mentioned above, we will consider the case when f is smooth.  In
particular we will be dealing with the set $R_{f|_{_V}} \subset
\re^n$ of regular values of $f$ in $V$.  Recall that $b \in
R_{f|_{_V}}$
 when the derivative $Df(x)$ is bijective at every point $x \in V$ with $f(x)=b$ (trivially true if $b \notin f(V)$),
  that is, $R_{f|_{_V}}=f(R)$ where $R = \{ x \in V \; ; \; \det(Df(x)) \neq 0 \}$ and $\det (Df(x))$
  is the determinant of the Jacobian matrix of the function $f$ at the point $x$.  We define the set $S_{f|_{_V}}$
  of all critical points of $f$ in $V$, that is, $S_{f|_{_V}} = \{ x \in V \; ; \; \det(Df(x)) = 0 \}$  and
  therefore $R_{f_{|_V}} = \re^n \setminus f(S_{f|_{_V}})$.

\begin{prop}
    Let $f \in \mathcal{C}^1(\overline{V}, \re^n)$ and consider  $b \in R_{f_{|_V}} \setminus f(\partial V)$  then
     the set $ f ^ {- 1} (\{b \}) $ is finite.

    \begin{proof}
        We have  that $ f $ is continuous and the unitary set $ \{b \} $ is closed, thus $ f ^ {- 1} (\{b \}) $
        is also a closed set on $\overline{V} $, consequently it is a closed set in $\re ^n$. It is also
        a limited set, because $f^{-1}(\{b \}) \subset V $  and $V$ is a limited set. Therefore $ f ^ {- 1} (\{b \}) $
         is closed and limited in $ \re ^ n $, i.e. it is a compact set.

        If $x \in f^{-1}(\{b \}) $ we have $\det(Df(x)) \neq 0$, then by  Inverse Map Theorem  $f$ is a
         diffeomorphism from a neighborhood $U_x$ of $x$ onto a neighborhood $\widehat{U}$ of $b$.
         Observe that $x$ is the unique point in $U_x$ such that $f(x)=b$. We have
        $$ f^{-1}(\{b \}) \subset \bigcup_{x \in f^{-1}(\{b \}) }  U_x.$$
        Using that $ f ^ {- 1} (\{b \}) $ is compact and $\{U_x \} $ is an open cover for $ f ^ {- 1} (\{b \}) $,
        then there is a finite set $\{x_1,\dots,x_k\}\subset  f^{-1}(\{b \})$ such that
        $$ f^{-1}(\{b \}) \subset \displaystyle\bigcup_{j=1 }^k  U_{x_j}.$$
        It implies that $ f ^ {- 1} (\{b \}) $ is finite because $U_{x_j}\cap f^{-1}(\{b \}) = \{x_j\}$.
    \end{proof}
\end{prop}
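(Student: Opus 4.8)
The plan is to establish that $f^{-1}(\{b\})$ is a compact set every point of which is isolated, and then to invoke the elementary fact that such a set is necessarily finite. The only analytic input needed is the Inverse Function Theorem; the rest is point-set topology in $\re^n$.

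First I would verify compactness. Since $f$ is continuous on $\overline{V}$ and $\{b\}$ is closed, the set $f^{-1}(\{b\})$ is closed in $\overline{V}$, hence closed in $\re^n$ because $\overline{V}$ is itself closed. The hypothesis $b \notin f(\partial V)$ guarantees that $f^{-1}(\{b\})$ is contained in the bounded open set $V$, so it is bounded. A closed and bounded subset of $\re^n$ is compact, so $f^{-1}(\{b\})$ is compact.

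Next I would show that each point of $f^{-1}(\{b\})$ is isolated in this preimage. Fix $x_0$ with $f(x_0) = b$. Because $b \in R_{f|_V}$, the Jacobian $Df(x_0)$ is invertible, so the Inverse Function Theorem produces a neighborhood $U_{x_0}$ of $x_0$ on which $f$ is a diffeomorphism onto a neighborhood of $b$; in particular $x_0$ is the unique point of $U_{x_0}$ mapped to $b$. Thus $\{U_{x} : x \in f^{-1}(\{b\})\}$ is an open cover of the compact set $f^{-1}(\{b\})$, and extracting a finite subcover $U_{x_1}, \dots, U_{x_k}$ forces $f^{-1}(\{b\}) = \{x_1, \dots, x_k\}$, since each $U_{x_j}$ meets the preimage only in $x_j$. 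This gives finiteness.

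The step most worth watching is the topological bookkeeping rather than any deep argument: one must use $b \notin f(\partial V)$ precisely to keep the preimage inside $V$ (so that the Inverse Function Theorem, an interior statement, applies and boundedness is automatic), and one must remember that "closed in $\overline{V}$" upgrades to "closed in $\re^n$". There is no genuine obstacle beyond assembling these pieces in the right order.
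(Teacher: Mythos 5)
Your argument is correct and follows exactly the same route as the paper: compactness of $f^{-1}(\{b\})$ from continuity and boundedness of $V$, isolation of each preimage point via the Inverse Function Theorem, and a finite subcover to conclude. No differences worth noting.
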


Now we are able to define the Brouwer topological degree.

\begin{defi}\label{defigraudebrouwer}
    Let $f \in \mathcal{C}^{\infty}(\overline{V}, \re^n)$ and consider $b \in R_{f_{|_V}} \setminus f(\partial V)$
     then we define the Brouwer topological degree of $ f $  relative to $ V $ at the point $ b $ as the integer
    \begin{equation*}
    d_B(f,V,b)=\displaystyle\sum_{x\in f^{-1}(\{b \})} {\sgn}  \det (Df(x)),
    \end{equation*}
    where $\sgn$  denotes the sign function and the set $f^{-1}(\{b \})$ is finite. If
    $f^{-1}(\{b \})=\emptyset$ then we define $d_B(f,V,b)=0$.
\end{defi}

\begin{obs}
    From the Definition \ref{defigraudebrouwer} we can see that $ d_B (f, V, b) = d_B (f-b, V, 0) $,
     because if we consider $ g = f-b $ we have $ g^{- 1} (\{ 0 \}) = f ^ {- 1} (\{ b \}) $.
\end{obs}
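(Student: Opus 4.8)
The plan is to reduce the identity $d_B(f,V,b)=d_B(f-b,V,0)$ to the smooth case covered by Definition~\ref{defigraudebrouwer}, and then to observe that translating by the constant vector $b$ leaves every ingredient of that definition unchanged. First I would recall that for a general continuous $f\colon\overline V\to\re^n$ with $b\notin f(\partial V)$ the degree $d_B(f,V,b)$ is defined by uniformly approximating $f$ on $\overline V$ by smooth maps $f_k$ having $b$ as a regular value on $V$ and with $b\notin f_k(\partial V)$, and by setting $d_B(f,V,b)=\lim_k d_B(f_k,V,b)$, a limit known to be independent of the chosen approximating sequence. Since $f_k\to f$ uniformly on $\overline V$ is equivalent to $f_k-b\to f-b$ uniformly, and $b\notin f_k(\partial V)$ is equivalent to $0\notin(f_k-b)(\partial V)$, it is enough to prove the equality for $f\in\mathcal C^{\infty}(\overline V,\re^n)$ and $b\in R_{f|_V}\setminus f(\partial V)$.

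For such an $f$ I would set $g=f-b$ and check the three elementary facts that make the two defining sums in Definition~\ref{defigraudebrouwer} coincide termwise: (i) $g(x)=0$ if and only if $f(x)=b$, so $g^{-1}(\{0\})=f^{-1}(\{b\})$ and in particular $0\notin g(\partial V)$; (ii) $Dg(x)=Df(x)$ for all $x\in V$, because the derivative of a constant map vanishes, hence $\det Dg(x)=\det Df(x)$ and $0$ is a regular value of $g|_V$ exactly because $b$ is a regular value of $f|_V$; (iii) consequently $\sgn\det Dg(x)=\sgn\det Df(x)$ at every common zero. Summing over $g^{-1}(\{0\})=f^{-1}(\{b\})$ then yields $d_B(g,V,0)=d_B(f,V,b)$, which is the asserted equality.

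There is no genuine obstacle here: the statement is a direct unwinding of the definition. The only point that deserves a line of justification is the reduction to the smooth case, where one must invoke the standard fact that the limiting procedure defining $d_B$ for merely continuous maps is well posed. Granting that, the correspondence $f_k\mapsto f_k-b$ is a bijection between admissible smooth approximations of $(f,b)$ and of $(f-b,0)$ that preserves the value of the smooth degree by the computation of the previous paragraph, so the two continuous degrees agree as well.
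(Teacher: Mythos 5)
Your argument is correct and its core is exactly the paper's: setting $g=f-b$ gives $g^{-1}(\{0\})=f^{-1}(\{b\})$ and $Dg=Df$, so the two defining sums in Definition \ref{defigraudebrouwer} coincide term by term. The extra reduction you carry out for merely continuous $f$ is sound but not needed here, since the remark is stated immediately after the smooth, regular-value definition and concerns only that case.
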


The Sard-Brown Theorem says that $R_{f_{|_V}}$ is dense $\re^n$,
which allow us to find regular values in any neighborhood of a
critical value.  Furthermore in \cite{OutRui2009} is proved that the
Brouwer topological degree is locally constant, that is, if $b \in
R_{f|_{_V}} \setminus f(\partial V)$ there is a neighborhood of $b$,
$W \subset R_{f|_{_V}} \setminus f(\partial V)$ such
 that $d_B(f,V,b)=d_B(f,V,a),$ for all $a \in W$.  Now we can define Brouwer topological degree for critical values or regular values.

\begin{defi}
    Let $f \in \mathcal{C}^{\infty}(\overline{V}, \re^n)$ and consider $b \in \re^n \setminus f(\partial V)$
    then we define the Brouwer topological degree of the $ f $  relative to $ V $ at the point $ b $ as the integer
    \begin{equation*}
    d_B(f,V,b)=d_B(f,V,a),
    \end{equation*}
    for any such regular value $a$ (which exist by the Sard-Brown Theorem).
\end{defi}

The next step is to define the Brouwer topological degree for
continuous functions. Consider $f \in \mathcal{C}(\overline{V}
,\re^n)$, $b \in \re^n\setminus f(\partial V)$ and $r =
\rho(b,f(\partial V))$ being the distance between point $b$ and set
$f(\partial V)$. According to Weirstrass Approximation Theorem we
find a polynomial (hence smooth) mapping $g$ such that  $||g-f|| <
\frac{r}{2}$. Next result ensures that we can define
$d_B(f,V,b)=d_B(g,V,b)$.

\begin{teo}
    Fix the set $U = \{ g \in \mathcal{C}^{\infty}(\overline{V} ,\re^n) \; ; \; ||g-f|| < \frac{r}{2} \}$
     then $d_B(g_1,V,b)=d_B(g_2,V,b)$, for $g_1, \; g_2 \in U$ and $b \in \re^n \setminus f(\partial V)$.
    \end{teo}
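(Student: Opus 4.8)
The plan is to deduce the statement from the homotopy invariance of the Brouwer degree, applied to the straight-line homotopy joining $g_1$ and $g_2$. First I would set $G_s=(1-s)g_1+sg_2$ for $s\in[0,1]$, so that $G_0=g_1$ and $G_1=g_2$; for each fixed $s$ one has $G_s\in\mathcal{C}^{\infty}(\overline{V},\re^n)$, and the map $(s,x)\mapsto G_s(x)$ is jointly continuous on $[0,1]\times\overline{V}$.

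Next I would check that this homotopy never attains the value $b$ on the boundary $\partial V$. Since $r=\rho(b,f(\partial V))$ and $f(\partial V)$ is compact with $b\notin f(\partial V)$, we have $r>0$ and $||f(x)-b||\ge r$ for every $x\in\partial V$. On the other hand, because $g_1,g_2\in U$, for every $s\in[0,1]$ and every $x\in\partial V$,
\[
||G_s(x)-f(x)||\le (1-s)\,||g_1-f||+s\,||g_2-f||<(1-s)\tfrac{r}{2}+s\tfrac{r}{2}=\tfrac{r}{2}.
\]
Hence $||G_s(x)-b||\ge ||f(x)-b||-||G_s(x)-f(x)||>r-\tfrac{r}{2}=\tfrac{r}{2}>0$, so that $b\notin G_s(\partial V)$ for all $s\in[0,1]$. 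In particular $d_B(G_s,V,b)$ is well defined for every $s$, in the sense of the last definition above.

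With this in hand, I would invoke the homotopy invariance of the Brouwer degree for (smooth, hence a fortiori continuous) maps: if $G\colon[0,1]\times\overline{V}\to\re^n$ is continuous and $b\notin G(s,\partial V)$ for every $s\in[0,1]$, then $s\mapsto d_B(G(s,\cdot),V,b)$ is constant; see \cite{OutRui2009}. Evaluating this at $s=0$ and $s=1$ yields
\[
d_B(g_1,V,b)=d_B(G_0,V,b)=d_B(G_1,V,b)=d_B(g_2,V,b),
\]
which is exactly the assertion. (If one wishes to remain within the material developed above, one may first use the Sard--Brown theorem and the local constancy of $d_B$ in the target point to reduce to the case where $b$ is a regular value of both $g_1$ and $g_2$; but the substantive ingredient that forces the two sums $\sum\sgn\det Dg_i$ to coincide is still homotopy invariance.)

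The only genuine obstacle is this last step: the boundary estimate is a one-line triangle inequality, whereas homotopy invariance of the degree is the nontrivial input. If the appendix is to be fully self-contained, it must be established beforehand — typically by showing that for smooth $g$ and $b\notin g(\partial V)$ one has $d_B(g,V,b)=\int_V\det(Dg(x))\,\varphi_\varepsilon(g(x)-b)\,dx$ for a suitable mollifier $\varphi_\varepsilon$ supported in a small ball around $0$, and then checking, via the divergence theorem, that this integral does not change along the homotopy $G_s$; otherwise it is simply cited from \cite{OutRui2009}. Once homotopy invariance is available, the proof above is immediate.
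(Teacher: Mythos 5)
Your argument is correct, but note that the paper itself does not actually prove this theorem: its ``proof'' consists solely of the sentence referring the reader to \cite{OutRui2009}. So there is nothing in the paper to compare against step by step; you have supplied the standard argument that the reference would give. Your boundary estimate is right and is the only part specific to this statement: since $g_1,g_2\in U$, the convex combination $G_s=(1-s)g_1+sg_2$ satisfies $\|G_s(x)-f(x)\|<r/2$ on $\partial V$, hence $\|G_s(x)-b\|>r/2>0$ there, so the whole segment of maps avoids $b$ on the boundary and the degree $d_B(G_s,V,b)$ is defined for every $s$. From that point on everything is delegated to homotopy invariance of the Brouwer degree, which is indeed the substantive input and is not developed in the appendix either; you are right to flag that it must be either imported from \cite{OutRui2009} or established via an integral representation of the degree. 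In short: your proof is the correct and complete version of what the paper merely cites, with the honest caveat that the deep ingredient (homotopy invariance) is itself taken on faith from the same source the paper points to. One could equally well reduce to the case where $b$ is a regular value of both $g_1$ and $g_2$ via Sard--Brown and local constancy, as you remark, but that reduction alone does not make the two signed counts agree without the homotopy argument, so your structure is the right one.
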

\begin{proof}
    For a proof and more details of this method we suggest \cite{OutRui2009}.
\end{proof}

Now we can define the Brouwer topological degree for continuous
functions.

\begin{defi}
The Brouwer degree for $f \in \mathcal{C}(\overline{V},\re^n)$ is
$d_B(f,V,b)=d_B(g,V,b)$,
     for  $g \in U$ and  $b \in \re^n \setminus f(\partial V)$.
\end{defi}

This final result straightforward  clarifies that the Brouwer degree
for simple zeroes of $\mathcal{C}^1$-functions is non-zero.

\begin{lema}
    Consider $ f \in \mathcal{C}^1(\Omega, \re ^ n )$, where $ \Omega$ is an open set of $\re^n$.
     If there is $ a \in  \Omega $ with $ f (a) = 0 $ and $ \det(Df (a)) \neq 0 $, then there is a
      neighborhood $ V $ of $ a $ such that $ f (x) \neq 0 $ for every $ x \in \overline {V} \setminus \{a \} $ and $ d_B (f, V, 0) \neq 0 $.
\end{lema}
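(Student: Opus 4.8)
The plan is to reduce everything to the Inverse Function Theorem together with the sum formula for the Brouwer degree at a regular value, following the development of the Appendix.

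First I would use that $f\in\mathcal{C}^1(\Omega,\re^n)$ and $\det(Df(a))\ne0$, so by the Inverse Function Theorem there is an open neighborhood $U\subset\Omega$ of $a$ on which $f$ is a $\mathcal{C}^1$-diffeomorphism onto an open set; in particular $f|_U$ is injective, $f(U)$ is open and contains $0$ as an interior point, and $Df(x)$ is invertible for every $x\in U$. Shrinking $U$ if necessary and using continuity of $x\mapsto\det Df(x)$, I may also assume $\sgn\det Df(x)=\sgn\det Df(a)$ for all $x\in U$. Now take $V$ to be an open ball centered at $a$ with $\overline{V}\subset U$. Then any $x\in\overline{V}$ with $f(x)=0$ must equal $a$ by injectivity, so $f(x)\ne0$ on $\overline{V}\setminus\{a\}$; in particular $0\notin f(\partial V)$, and $0$ is a regular value of $f|_V$ because its unique preimage $a$ has $\det Df(a)\ne0$.

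Next I would compute $d_B(f,V,0)$. Put $r=\rho(0,f(\partial V))>0$, which is positive since $f(\partial V)$ is compact and avoids $0$. By mollification choose a smooth map $g$ with $\|g-f\|_{\overline V}<r/2$ and, moreover, with $\|g-f\|_{\mathcal{C}^1(\overline V)}$ as small as needed; by the definition of the Brouwer degree for continuous maps (and the theorem of the Appendix on maps within $r/2$ of $f$) we then have $d_B(f,V,0)=d_B(g,V,0)$. For $g$ sufficiently $\mathcal{C}^1$-close to $f$ on $\overline V$, a quantitative version of the Inverse Function Theorem shows $g$ is still a diffeomorphism of $\overline V$ onto its image, $0\notin g(\partial V)$, $g(V)$ still contains $0$, hence $g$ has in $V$ a single zero $a'$ with $Dg(a')$ invertible and $\sgn\det Dg(a')=\sgn\det Df(a)$. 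Since $g$ is smooth, Definition \ref{defigraudebrouwer} applies directly and gives $d_B(g,V,0)=\sgn\det Dg(a')=\sgn\det Df(a)\in\{-1,1\}$. Therefore $d_B(f,V,0)=\sgn\det Df(a)\ne0$, which completes the proof.

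The only delicate point is the last one: making sure the smooth approximation $g$ inherits the local picture of $f$ near $a$, i.e. a single nondegenerate zero in $V$ with the same sign of the Jacobian. I expect this to be the main obstacle to write cleanly; it follows either from the quantitative Inverse Function Theorem sketched above, or from an admissible-homotopy argument along $f_t=(1-t)f+tg$, which never vanishes on $\partial V$ once $\|g-f\|<r$, so the degree is unchanged. Alternatively, if one simply takes for granted that the sum formula $d_B(f,V,0)=\sum_{x\in f^{-1}(0)}\sgn\det Df(x)$ remains valid for $\mathcal{C}^1$ maps at regular values off $f(\partial V)$, the argument collapses at once: the unique zero of $f$ in $\overline V$ is $a$, so $d_B(f,V,0)=\sgn\det Df(a)=\pm1\ne0$.
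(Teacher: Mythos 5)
The paper states this lemma without proof (it is offered as a ``straightforward'' closing remark of the Appendix), so there is no argument of the authors to compare against; your proposal supplies a genuine proof and it is correct. The first half --- Inverse Function Theorem, local injectivity, $V$ a small ball with $\overline V$ inside the injectivity neighborhood, hence $a$ is the only zero in $\overline V$ and $0\notin f(\partial V)$ --- is exactly right. For the degree computation, your main route (a $\mathcal{C}^1$-close smooth mollification $g$, which by a quantitative Inverse Function Theorem on the convex compact $\overline V$ is again injective with a single nondegenerate zero of the same Jacobian sign, so that the sum formula gives $d_B(g,V,0)=\pm1$) is sound, and you correctly identify it as the only delicate point. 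One small caveat: your ``alternative'' via the homotopy $f_t=(1-t)f+tg$ only re-derives $d_B(f,V,0)=d_B(g,V,0)$, which you already had from the very definition of the degree for continuous maps; it does not by itself control the zero set of $g$. If you want a homotopy shortcut, homotope instead to the affine map $x\mapsto Df(a)(x-a)$, which is admissible on $\partial V$ for $V$ small since $f(x)=Df(a)(x-a)+o(|x-a|)$, and whose degree is visibly $\sgn\det Df(a)$. Finally, your closing observation --- that the sum formula $d_B(f,V,0)=\sum_{x\in f^{-1}(0)\cap V}\sgn\det Df(x)$ already holds for $\mathcal{C}^1$ maps at regular values away from $f(\partial V)$, so the lemma is immediate --- is almost certainly what the authors mean by ``straightforward,'' and is the cleanest thing to say; your longer argument is what one must write if one insists on deriving everything from the Appendix's definition, which is stated only for $\mathcal{C}^\infty$ maps.
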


\section*{Acknowlegements}

The authors thank Adriana Buic\u{a} for her comments in several
parts of this paper.

This work has received funding from the Ministerio de Econom\'{\i}a,
Industria y Competitividad - Agencia Estatal de Investigaci\'{o}n
(MTM2016-77278-P FEDER grant), the Ag\`{e}ncia de Gesti\'{o} d'Ajuts
Universitaris i de Recerca (2017 SGR 1617 grant), CAPES grant
88881.068462/2014-01, CNPq grant 304798/2019-3, and S\~{A}{\pounds}o Paulo Paulo Research Foundation (FAPESP) grants 2019/10269-3, 2018/05098-2 and 2016/00242-2.


\begin{thebibliography}{10}

\bibitem{AjrPitVen2011}
{\sc Ajraldi, V., Pittavino, M., and Venturino, E.}
\newblock Modeling herd behavior in population systems.
\newblock {\em Nonlinear Anal. Real World Appl. 12}, 4 (2011), 2319--2338.

\bibitem{AlvColDePro2020}
{\sc \'{A}lvarez, M.~J., Coll, B., De~Maesschalck, P., and Prohens,
R.}
\newblock Asymptotic lower bounds on {H}ilbert numbers using canard cycles.
\newblock {\em J. Differential Equations 268}, 7 (2020), 3370--3391.

\bibitem{Braun}
{\sc Braun, M.}
\newblock {\em Differential equations and their applications}, fourth~ed.,
  vol.~11 of {\em Texts in Applied Mathematics}.
\newblock Springer-Verlag, New York, 1993.
\newblock An introduction to applied mathematics.

\bibitem{Bra2012}
{\sc Braza, P.~A.}
\newblock Predator-prey dynamics with square root functional responses.
\newblock {\em Nonlinear Anal. Real World Appl. 13}, 4 (2012), 1837--1843.

\bibitem{BuiLli2004}
{\sc Buic\u{a}, A., and Llibre, J.}
\newblock Averaging methods for finding periodic orbits via {B}rouwer degree.
\newblock {\em Bull. Sci. Math. 128}, 1 (2004), 7--22.

\bibitem{BuiLliMak2015}
{\sc Buic\u{a}, A., Llibre, J., and Makarenkov, O.}
\newblock Bifurcations from nondegenerate families of periodic solutions in Lipschitz systems.
\newblock {\em J. Differential Equations 252}, 6 (2012),  3899--3919.

\bibitem{CheArtLli2003}
{\sc Cherkas, L.~A., Art\'{e}s, J.~C., and Llibre, J.}
\newblock Quadratic systems with limit cycles of normal size.
\newblock No.~1. 2003, pp.~31--46.
\newblock \{In memory of C. S. Sibirsky\}.

\bibitem{ChrLi2007}
{\sc Christopher, C., and Li, C.}
\newblock {\em Limit cycles of differential equations}.
\newblock Advanced Courses in Mathematics. CRM Barcelona. Birkh\"{a}user
  Verlag, Basel, 2007.

\bibitem{ChrLlo1995}
{\sc Christopher, C.~J., and Lloyd, N.~G.}
\newblock Polynomial systems: a lower bound for the {H}ilbert numbers.
\newblock {\em Proc. Roy. Soc. London Ser. A 450}, 1938 (1995), 219--224.

\bibitem{CimGasMan1995}
{\sc Cima, A., Gasull, A., and Ma\~{n}osas, F.}
\newblock Cyclicity of a family of vector fields.
\newblock {\em J. Math. Anal. Appl. 196}, 3 (1995), 921--937.

\bibitem{GasLiTor2015}
{\sc Gasull, A., Li, C., and Torregrosa, J.}
\newblock Limit cycles for 3-monomial differential equations.
\newblock {\em J. Math. Anal. Appl. 428}, 2 (2015), 735--749.

\bibitem{Ily2002}
{\sc Ilyashenko, Y.}
\newblock Centennial history of {H}ilbert's 16th problem.
\newblock {\em Bull. Amer. Math. Soc. (N.S.) 39}, 3 (2002), 301--354.

\bibitem{KarStu1966}
{\sc Karlin, S., and Studden, W.~J.}
\newblock {\em Tchebycheff systems: {W}ith applications in analysis and
  statistics}.
\newblock Pure and Applied Mathematics, Vol. XV. Interscience Publishers John
  Wiley \& Sons, New York-London-Sydney, 1966.

\bibitem{LiLiuYan2009}
{\sc Li, C., Liu, C., and Yang, J.}
\newblock A cubic system with thirteen limit cycles.
\newblock {\em J. Differential Equations 246}, 9 (2009), 3609--3619.

\bibitem{Lin1980}
{\sc Lins~Neto, A.}
\newblock On the number of solutions of the equation {$dx/dt=\sum
  ^{n}_{j=0}\,a_{j}(t)x^{j}$}, {$0\leq t\leq 1$}, for which {$x(0)=x(1)$}.
\newblock {\em Invent. Math. 59}, 1 (1980), 67--76.

\bibitem{LliTei2015}
{\sc Llibre, J., and Teixeira, M.~A.}
\newblock Limit cycles for {$m$}-piecewise discontinuous polynomial
  {L}i\'{e}nard differential equations.
\newblock {\em Z. Angew. Math. Phys. 66}, 1 (2015), 51--66.

\bibitem{Mardesic1998}
{\sc Mard\v{e}si\'{c}, P.}
\newblock {\em Chebyshev systems and the versal unfolding of the cusps of order
  {$n$}}, vol.~57 of {\em Travaux en Cours [Works in Progress]}.
\newblock Hermann, Paris, 1998.

\bibitem{Mic2012}
{\sc Mickens, R.~E.}
\newblock An exactly solvable model for the spread of disease.
\newblock {\em College Math. J. 43}, 2 (2012), 114--121.

\bibitem{Nov2014}
{\sc Novaes, D.~D.}
\newblock On nonsmooth perturbations of nondegenerate planar centers.
\newblock {\em Publ. Mat. 58}, suppl. (2014), 395--420.

\bibitem{OutRui2009}
{\sc Outerelo, E., and Ruiz, J.~M.}
\newblock {\em Mapping degree theory}, vol.~108 of {\em Graduate Studies in
  Mathematics}.
\newblock American Mathematical Society, Providence, RI; Real Sociedad
  Matem\'{a}tica Espa\~{n}ola, Madrid, 2009.

\bibitem{PanPau2017}
{\sc Panazzolo, D., and da~Silva, P.~R.}
\newblock Regularization of discontinuous foliations: blowing up and sliding
  conditions via {F}enichel theory.
\newblock {\em J. Differential Equations 263}, 12 (2017), 8362--8390.

\bibitem{Per1984}
{\sc Perko, L.~M.}
\newblock Limit cycles of quadratic systems in the plane.
\newblock {\em Rocky Mountain J. Math. 14}, 3 (1984), 619--645.

\bibitem{PloSwi2018}
{\sc P\l ociniczak, L.~u., and \'{S}wita\l a, M.}
\newblock Monotonicity, oscillations and stability of a solution to a nonlinear
  equation modelling the capillary rise.
\newblock {\em Phys. D 362\/} (2018), 1--8.

\bibitem{ProTor2019}
{\sc Prohens, R., and Torregrosa, J.}
\newblock New lower bounds for the {H}ilbert numbers using reversible centers.
\newblock {\em Nonlinearity 32}, 1 (2019), 331--355.

\bibitem{SandVerhulst1985}
{\sc Sanders, J.~A., and Verhulst, F.}
\newblock {\em Averaging methods in nonlinear dynamical systems}, vol.~59 of
  {\em Applied Mathematical Sciences}.
\newblock Springer-Verlag, New York, 1985.

\bibitem{Sma1998}
{\sc Smale, S.}
\newblock Mathematical problems for the next century.
\newblock {\em Math. Intelligencer 20}, 2 (1998), 7--15.

\bibitem{Van1920}
{\sc van~der Pol, B.}
\newblock A theory of the amplitude of free and forced triode vibrations.
\newblock {\em Radio Rev. 1\/} (1920), 701--710.

\bibitem{Van1926}
{\sc van~der Pol, B.}
\newblock On relaxation-oscillations.
\newblock {\em Lond. Edinb. Dublin Philos. Mag. J. Sci. 2}, 7 (1926), 978--992.

\bibitem{Verhulst1991}
{\sc Verhulst, F.}
\newblock {\em Nonlinear differential equations and dynamical systems}.
\newblock Universitext. Springer-Verlag, Berlin, 1990.
\newblock Translated from the Dutch.

\end{thebibliography}

\end{document}